\documentclass[amstex,10pt,reqno]{amsart}
\usepackage{amsmath,amsfonts,amssymb,amsthm,hyperref,enumerate,multicol,graphicx,cite}
\textwidth 16cm
\textheight 22cm
\topmargin -1.0cm
\oddsidemargin 1cm
\evensidemargin 1cm
\newtheorem{lemma}{Lemma}
\newtheorem{thm}{Theorem}

\newtheorem{example}{Example}

\newtheorem{remark}{Remark}
\thispagestyle{empty}
\numberwithin{equation}{section}
\begin{document}

\leftline{ \scriptsize \it}
\title[]
{Generalized Bernstein-Durrmeyer operators of blending type}
\maketitle

\begin{abstract}
In this article we present the Durrmeyer variant of generalized Bernstein  operators that preserve the constant functions involving non-negative parameter $\rho$. We derive the approximation behaviour of these operators including global approximation theorem via Ditzian-Totik modulus of continuity, the order of convergence for the Lipschitz type space. Furthermore, we study a Voronovskaja type asymptotic formula and local approximation theorem by means of second order modulus of smoothness. Furthermore, we obtain the rate of approximation for absolutely continuous functions having a derivative equivalent with a function of bounded variation. In the last section of the article, we illustrate the convergence of these operators for certain functions using Maple software. \\
Keywords: Positive approximation, Global approximation, rate of convergence, modulus of continuity, Steklov mean.\\
Mathematics Subject Classification(2010): 41A25, 26A15.
\end{abstract}

\section{Introduction}
 Bernstein introduced the most famous algebraic polynomials ${B}_n(f;x)$ in approximation theory in order to give a constructive proof of Weierstrass's theorem which is given by
$${B}_n(f;x)=\displaystyle\sum_{k=0}^np_{n,k}(x)f\left(\frac{k}{n}\right), \,\,\ x\in[0,1],$$
where $p_{n,k}(x)=\displaystyle{n\choose k}x^k(1-x)^{n-k}$ and he proved that if $f\in C[0,1]$ then $B_n(f;x)$ converges uniformly to $f(x)$ in $[0,1].$ \\
The Bernstein operators have been used in many branches of mathematics and computer science. Since their useful structure, Bernstein polynomials and their generalizations have been intensively studied. Among the other we refer the readers to (cf. \cite{TA,  GG, CGR, MAK, MYZ, HIR, TBE}).\\

For $f\in C(J)$ with $J=[0,1],$  Chen et al. \cite{CTLX} introduced a vital generalization of the Bernstein operators depending a non-negative real parameter $\alpha$ $(0\leq\alpha\leq1)$ as
\begin{eqnarray}\label{ab1}
T_n^{(\alpha)}(f;x)=\sum_{k=0}^{n}p_{n,k}^{(\alpha)}(x)f\left(\frac{k}{n}\right), \quad x\in J
\end{eqnarray}
where
$p_{n,k}^{(\alpha)}(x)=\displaystyle\left[{n-2\choose k}(1-\alpha)x+{n-2\choose k-2}(1-\alpha)(1-x)+{n\choose k}\alpha x(1-x)\right]x^{k-1}(1-x)^{n-k-1}$ and $n\geq2.$ They obtained a Voronovskaja type asymptotic
formula, the rate of approximation in terms of the modulus of smoothness, and Shape preserving properties for these operators. In the particular case, $\alpha=1,$ these operators reduce the well-known Bernstein operators. Kajla and Acar \cite{E4} introduced the Durrmeyer variant of the operators (\ref{ab1}) and investigated the rate of approximation of these operators.  \\

Gonska and P\v{a}lt\v{a}nea \cite{GP} presented genuine Bernstein-Durrmeyer type operators and obtained the simultaneous approximation for these operators. Gupta and Rassias \cite{GR} studied approximation behavior of Durrmeyer type of Lupa\c{s} operators based on Polya distribution Goyal et al. \cite{GVP} derived Baskakov-Sz\`{a}sz type operators and studied quantitative convergence theorems for these operators. Gupta et al. \cite{VAS} introduced a hybrid operators based on   inverse Polya-Eggenberger distribution and studied degree of approximation and uniform convergence. Acu and Gupta \cite{AGG} introduced a summation-integral type operators involving two parameters and studied some direct results e.g. Voronovskaja type asymptotic formula, local approximation and weighted approximation of these operators.  Very recently, Kajla and Goyal \cite{KG} considered the hybrid operators involving non-negative parameters and investigated their order of approximation. In the literature survey, several researchers  has been studied  the approximation properties of hybrid operators [cf. \cite{ AGII, TA3, TA4, AGA,  AK, VRP, KAA, AAC, KLA, RPA, TAI, DCM}].\\

For $f\in C(J),$ we construct the following Durrmeyer variant of the operators (\ref{ab1}) depending on parameter $\rho>0$ as follows:
\begin{eqnarray}\label{ma2}
\mathcal{G}_{n,\rho}^{(\alpha)}(f;x)=\sum_{k=0}^{n}p_{n,k}^{(\alpha)}(x) \int_{0}^1 \mu_{n,\rho}(t)f(t) dt,
\end{eqnarray}
where $\mu_{n,\rho}(t)=\dfrac{t^{k\rho}(1-t)^{(n-k)\rho}}{B\left(k\rho+1,(n-k)\rho+1\right)}$ and $B\left(k\rho+1,(n-k)\rho+1\right)$ is the beta function defined by $B(e,f)=\displaystyle\int_0^1t^{e-1}(1-t)^{f-1}dt=\dfrac{\Gamma (e) \Gamma (f)}{\Gamma(e+f)},$ $e,f>0$ and $p_{n,k}^{(\alpha)}(x)$ is defined as above. It is seen that the operators $\mathcal{G}_{n,\rho}^{(\alpha)}$ reproduce the constant functions.

The aim of this note is to find the approximation properties for the generalized Bernstein-Durrmeyer operators involving a nonnegative parameter of the operators defined in (\ref{ma2}). We give a Voronovskaja type theorem, global approximation theorem by means of Ditzian-Totik modulus of smoothness, Lipschitz type space and a local approximation theorem with the help of second order modulus of continuity. Furthermore, we study the rate of approximation for absolutely continuous functions having a derivative equivalent with a function of bounded variation. Lastly, we illustrate the convergence of these operators for certain functions using Maple software.

\section{Auxiliary Results }
In order to prove main results, we will show some lemmas in this section.
Let $e_i(x)=x^i, i=\overline{0,4}$
\begin{lemma}\label{l1}
For the generalized Bernstein-Durrmeyer operators $\mathcal{G}_{n,\rho}^{(\alpha)}(f;x),$ we have
\begin{enumerate}[(i)]
\item $\mathcal{G}_{n,\rho}^{(\alpha)}(e_0;x)=1;$
\item $\mathcal{G}_{n,\rho}^{(\alpha)}(e_1;x)=\dfrac{n\rho x+1}{n\rho+2};$
\item $\mathcal{G}_{n,\rho}^{(\alpha)}(e_2;x)=\dfrac{x^2\rho^2\left(n^2+2(\alpha-1)-n\right)}
    {(n\rho+3)(n\rho+2)}+\dfrac{x\rho\left(n\rho^2+3n\rho-2(\alpha-1)\rho^2\right)}{(n\rho+3)(n\rho+2)}+
    \dfrac{2}{(n\rho+3)(n\rho+2)};$
\item $\mathcal{G}_{n,\rho}^{(\alpha)}(e_3;x)=\dfrac{x^3\rho^3\left(n^3+6n\alpha-3n^2-4n-12(\alpha-1)\right)}{(n\rho+4)(n\rho+3)(n\rho+2)}
    \\+\dfrac{3x^2\rho^2\left(6n^2+3n\rho+3n^2\rho-6n\alpha\rho-6n+6(\alpha-1)(2+3\rho)\right)}{(n\rho+4)(n\rho+3)(n\rho+2)}
+
   \dfrac{x\rho\left(n\rho^2+6n\rho+11n-6(\alpha-1)\rho(2+\rho)\right)}{(n\rho+4)(n\rho+3)(n\rho+2)}\\+\dfrac{6}{(n\rho+4)(n\rho+3)(n\rho+2)};$
    \item $\mathcal{G}_{n,\rho}^{(\alpha)}(e_4;x)=\dfrac{x^4\rho^4\left(n^4-6n^3+72(\alpha-1)-6n(10\alpha-9)+n^2(12\alpha-1)\right)}{(n\rho+5)(n\rho+4)(n\rho+3)(n\rho+2)}
       \\+ \dfrac{x^3\rho^3}{(n\rho+5)(n\rho+4)(n\rho+3)(n\rho+2)}\bigg[10n^3-30n^2+10n(6\alpha-4)-7n^2\rho+6n^3\rho+6n(6\alpha-5)\rho+6n(10\alpha-9)\rho
       +n^2(12\alpha-1)\rho-24(\alpha-1)(6\rho+5)\bigg]+\dfrac{x^2\rho^2}
       {(n\rho+5)(n\rho+4)(n\rho+3)(n\rho+2)}
       \bigg[35n(n-1)-10n\rho+30n^2\rho-10n(6\alpha-4)\rho
       -n\rho^2+7n^2\rho^2-6n(6\alpha-5)\rho^2+2(\alpha-1)
       (43\rho^2+90\rho+35)\bigg]\\+ \dfrac{x\rho\left(35n\rho+50n+10n\rho^2+n\rho^3-2(\alpha-1)\rho(7\rho^2+30\rho+35)\right)}{(n\rho+5)(n\rho+4)(n\rho+3)(n\rho+2)}+\dfrac{24}{(n\rho+5)(n\rho+4)(n\rho+3)(n\rho+2)}.$
 \end{enumerate}
\end{lemma}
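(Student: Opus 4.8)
The plan is to decouple the discrete part from the integral part of the operator. Writing $\mathcal{G}_{n,\rho}^{(\alpha)}(f;x)=\sum_{k=0}^{n}p_{n,k}^{(\alpha)}(x)\int_{0}^{1}\mu_{n,\rho}(t)f(t)\,dt$, for $f=e_{j}$ the inner integral is a ratio of beta functions,
\[
\int_{0}^{1}\mu_{n,\rho}(t)t^{j}\,dt=\frac{B\big(k\rho+j+1,(n-k)\rho+1\big)}{B\big(k\rho+1,(n-k)\rho+1\big)}=\frac{(k\rho+1)(k\rho+2)\cdots(k\rho+j)}{(n\rho+2)(n\rho+3)\cdots(n\rho+j+1)},
\]
which follows from $B(e,f)=\Gamma(e)\Gamma(f)/\Gamma(e+f)$ together with $\Gamma(z+1)=z\Gamma(z)$. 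For $j=0$ this equals $1$, giving (i). For $j\ge 1$ the numerator is a polynomial in $k$ of degree $j$ whose coefficients are polynomials in $\rho$; expanding it turns $\mathcal{G}_{n,\rho}^{(\alpha)}(e_{j};x)$ into a fixed linear combination, with common denominator $\prod_{\ell=2}^{j+1}(n\rho+\ell)$, of the quantities $S_{m}(x):=\sum_{k=0}^{n}k^{m}p_{n,k}^{(\alpha)}(x)$ for $m=0,1,\dots,j$.

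The next step is to record the sums $S_{m}(x)$. These are precisely $n^{m}T_{n}^{(\alpha)}(e_{m};x)$, and the values of $T_{n}^{(\alpha)}(e_{m};x)$ are known explicitly from Chen et al.~\cite{CTLX}; alternatively one recomputes them here by splitting $p_{n,k}^{(\alpha)}(x)$ into its three pieces $\binom{n-2}{k}(1-\alpha)x\,x^{k-1}(1-x)^{n-k-1}$, $\binom{n-2}{k-2}(1-\alpha)(1-x)\,x^{k-1}(1-x)^{n-k-1}$ and $\binom{n}{k}\alpha x(1-x)\,x^{k-1}(1-x)^{n-k-1}$, re-indexing each as an ordinary Bernstein sum of degree $n-2$ or $n$, and applying the classical factorial-moment identity $\sum_{k}\binom{n}{k}x^{k}(1-x)^{n-k}\binom{k}{r}=\binom{n}{r}x^{r}$. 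This yields $S_{0}=1$, $S_{1}=nx$ (i.e.\ $T_{n}^{(\alpha)}$ reproduces $e_{0}$ and $e_{1}$), and polynomials $S_{2},S_{3},S_{4}$ in $x$ of the corresponding degrees whose coefficients carry $\alpha$ only through the factor $(\alpha-1)$. The boundary indices $k=0,1,n-1,n$ require no special attention: the vanishing of the relevant $\binom{n-2}{\cdot}$ automatically removes the extra terms once the sums are written in that form.

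Finally I substitute the $S_{m}(x)$ into the linear combinations from the first paragraph and collect powers of $x$. For (ii) only $S_{0},S_{1}$ appear, so $\mathcal{G}_{n,\rho}^{(\alpha)}(e_{1};x)=\frac{1}{n\rho+2}\big(\rho S_{1}+S_{0}\big)=\frac{n\rho x+1}{n\rho+2}$. For (iii)–(v) one expands $(k\rho+1)(k\rho+2)\cdots(k\rho+j)$, inserts $S_{0},\dots,S_{j}$, puts everything over the denominator $(n\rho+2)(n\rho+3)\cdots(n\rho+j+1)$, and matches the coefficients of $x^{0},\dots,x^{j}$ to obtain the stated expressions. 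The only genuine obstacle is the size of the bookkeeping for $e_{3}$ and, especially, $e_{4}$ — assembling $S_{4}(x)$ and then reducing the degree-four combination is lengthy — so in practice I would perform those two expansions with a computer algebra system (the paper already relies on Maple) and simply report the simplified output; all remaining steps are short and mechanical.
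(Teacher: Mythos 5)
Your proposal is correct, and in fact it supplies a derivation that the paper itself omits: Lemma \ref{l1} is stated without any proof, so there is nothing in the paper to compare against except the implicit standard computation, which is exactly what you carry out. The two ingredients — the beta-function identity
$\int_{0}^{1}\mu_{n,\rho}(t)t^{j}\,dt=\frac{(k\rho+1)\cdots(k\rho+j)}{(n\rho+2)\cdots(n\rho+j+1)}$
and the reduction of $S_{m}(x)=n^{m}T_{n}^{(\alpha)}(e_{m};x)$ to ordinary Bernstein factorial moments of degrees $n-2$ and $n$ via the three-piece splitting of $p_{n,k}^{(\alpha)}$ — are both right, and your check of (ii) is exact. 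Deferring the $e_{3}$, $e_{4}$ bookkeeping to a computer algebra system is acceptable for a moment lemma of this kind.

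One observation worth recording: carrying your computation through for $e_{2}$ gives $S_{2}=x^{2}\bigl(n^{2}-n+2(\alpha-1)\bigr)+x\bigl(n-2(\alpha-1)\bigr)$, hence
$\mathcal{G}_{n,\rho}^{(\alpha)}(e_{2};x)=\dfrac{\rho^{2}S_{2}+3\rho S_{1}+2}{(n\rho+2)(n\rho+3)}$, whose coefficient of $x$ is $\rho\bigl(n\rho+3n-2(\alpha-1)\rho\bigr)$ rather than the printed $\rho\bigl(n\rho^{2}+3n\rho-2(\alpha-1)\rho^{2}\bigr)$. Your value is the one consistent with Lemma \ref{l2}(ii) and with Remark \ref{r1} (the two agree only when $\rho=1$), so the statement of Lemma \ref{l1}(iii) appears to carry a stray factor of $\rho$ in its middle term; this is a defect of the printed lemma, not of your argument.
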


\begin{lemma}\label{l2}
 For $m=1,2,$ the $m^{th}$ order central moments of $ \mathcal{G}_{n,\rho}^{(\alpha)}$  defined as $\tau_{n,\rho,m}^{(\alpha)}(x)=\mathcal{G}_{n,\rho}^{(\alpha)}((t-x)^m;x)$  we get
\begin{enumerate}[(i)]
\item $\tau_{n,\rho,1}^{(\alpha)}(x)=\dfrac{1-2x}{(n\rho+2)};$
\item $\tau_{n,\rho,2}^{(\alpha)}(x)=\dfrac{x(1-x)\left(\rho(n+(n-2\alpha+2)\rho)-6\right)}
    {(n\rho+2)(n\rho+3)}+\dfrac{2}{(n\rho+2)(n\rho+3)}.$
 \end{enumerate}
\end{lemma}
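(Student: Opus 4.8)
The plan is to use the linearity of $\mathcal{G}_{n,\rho}^{(\alpha)}$ together with the explicit moments of $e_0,e_1,e_2$ recorded in Lemma~\ref{l1}. Expanding $(t-x)^m$ in the monomial basis and applying the operator termwise,
\begin{align*}
\tau_{n,\rho,1}^{(\alpha)}(x)&=\mathcal{G}_{n,\rho}^{(\alpha)}(e_1;x)-x\,\mathcal{G}_{n,\rho}^{(\alpha)}(e_0;x),\\
\tau_{n,\rho,2}^{(\alpha)}(x)&=\mathcal{G}_{n,\rho}^{(\alpha)}(e_2;x)-2x\,\mathcal{G}_{n,\rho}^{(\alpha)}(e_1;x)+x^2\,\mathcal{G}_{n,\rho}^{(\alpha)}(e_0;x),
\end{align*}
so both identities reduce to inserting the formulas of Lemma~\ref{l1} and simplifying. (Equivalently, one could compute directly from (\ref{ma2}) using $\int_0^1\mu_{n,\rho}(t)\,t^j\,dt=B(k\rho+1+j,(n-k)\rho+1)/B(k\rho+1,(n-k)\rho+1)$ together with the elementary sums $\sum_k k^j\,p_{n,k}^{(\alpha)}(x)$, $j=0,1,2$, but routing through Lemma~\ref{l1} is shorter.)

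Part (i) is then a one-line computation: substituting $\mathcal{G}_{n,\rho}^{(\alpha)}(e_0;x)=1$ and $\mathcal{G}_{n,\rho}^{(\alpha)}(e_1;x)=(n\rho x+1)/(n\rho+2)$ and clearing the denominator $n\rho+2$, the $n\rho x$ terms cancel and there remains $(1-2x)/(n\rho+2)$. For part (ii) the common denominator is $(n\rho+2)(n\rho+3)$; I would bring $\mathcal{G}_{n,\rho}^{(\alpha)}(e_2;x)$, $-2x\,\mathcal{G}_{n,\rho}^{(\alpha)}(e_1;x)$ and $x^2$ over this denominator, expand the numerator, and collect it as a polynomial in $x$. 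The constant term collapses to $2$, producing the summand $2/\big[(n\rho+2)(n\rho+3)\big]$, and the key structural fact is that the coefficients of $x$ and of $x^2$ in the numerator come out as negatives of one another --- which is exactly what lets the factor $x(1-x)$ be pulled out, with complementary factor simplifying to $\rho\big(n+(n-2\alpha+2)\rho\big)-6$. This yields the asserted form.

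The only genuine obstacle is bookkeeping: in part (ii) one must carry the parameters $n$, $\rho$, $\alpha$ through several products and sums and verify that the $\alpha$-dependent part, the part polynomial in $n$ alone, and the mixed terms combine precisely so that the $x$- and $x^2$-coefficients are opposite and the $x(1-x)$ factorization appears. There is no conceptual difficulty; the algebra is elementary and, if desired, can be cross-checked with the same Maple routines used in the final section.
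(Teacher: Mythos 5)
Your proposal is correct and is precisely the computation the paper intends (Lemma \ref{l2} is stated without proof): expand $(t-x)^m$ by linearity and substitute the moments from Lemma \ref{l1}, after which part (i) is immediate and part (ii) reduces to checking that the numerator over $(n\rho+2)(n\rho+3)$ has the form $Ax(1-x)+2$ with $A=\rho\left(n+(n-2\alpha+2)\rho\right)-6$. One caveat you would hit when carrying out the bookkeeping: the linear-in-$x$ term of $\mathcal{G}_{n,\rho}^{(\alpha)}(e_2;x)$ as printed in Lemma \ref{l1}(iii) carries a spurious extra factor of $\rho$ (a direct computation from (\ref{ma2}) gives the coefficient $\rho\left(n\rho+3n-2(\alpha-1)\rho\right)$ rather than $\rho\left(n\rho^2+3n\rho-2(\alpha-1)\rho^2\right)$), and only with the corrected second moment do the $x$- and $x^2$-coefficients come out as exact negatives so that the $x(1-x)$ factorization and the stated formula for $\tau_{n,\rho,2}^{(\alpha)}(x)$ emerge.
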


\begin{remark}\label{r1} For every $x\in J,$ we have
\begin{eqnarray*}
\displaystyle \lim_{n\rightarrow\infty} n~\tau_{n,\rho,1}^{(\alpha)}(x)&=&\dfrac{1-2x}{\rho},\\
\displaystyle \lim_{n\rightarrow\infty}n ~\tau_{n,\rho,2}^{(\alpha)}(x)&=&\frac{x(1-x)(1+\rho)}{\rho},\\
\displaystyle \lim_{n\rightarrow\infty} n^2~\tau_{n,\rho,4}^{(\alpha)}(x)&=&\frac{3x^2(1-x)^2(1+\rho)^2}{\rho^2}.
\end{eqnarray*}
\end{remark}
\begin{lemma}\label{l3} For $n\in\mathbb{N}$, we obtain
$$\mathcal{G}_{n,\rho}^{(\alpha)}((t-x)^2;x)\leq\frac{\mathcal{X}_{\rho}^{(\alpha)}~~ x(1-x)}{(1+n\rho)},$$
where $\mathcal{X}_{\rho}^{(\alpha)}$ is a positive constant depending on $\alpha$ and $\rho.$
\end{lemma}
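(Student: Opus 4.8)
The plan is to read off the exact second central moment from Lemma~\ref{l2}(ii),
$$\mathcal{G}_{n,\rho}^{(\alpha)}((t-x)^2;x)=\frac{x(1-x)\bigl(\rho(n+(n-2\alpha+2)\rho)-6\bigr)}{(n\rho+2)(n\rho+3)}+\frac{2}{(n\rho+2)(n\rho+3)},$$
and to estimate each of the two summands by a constant depending only on $\rho$ and $\alpha$ times $\dfrac{x(1-x)}{1+n\rho}$; the point of the lemma is thus to trade the precise but unwieldy quadratic factor appearing in Lemma~\ref{l2}(ii) for the clean weight $x(1-x)$ and the clean order $(1+n\rho)^{-1}$.

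First I would bound the bracket: since $0\le\alpha\le1$ we have $n-2\alpha+2\le n+2$, hence $\rho(n+(n-2\alpha+2)\rho)-6\le\rho n(1+\rho)+2\rho^2$. Combining this with the elementary inequality $(n\rho+2)(n\rho+3)=(n\rho)^2+5n\rho+6\ge(1+n\rho)^2$ and with $\dfrac{n\rho}{1+n\rho}<1$, $\dfrac{1}{1+n\rho}\le1$, the first summand is at most
$$\frac{x(1-x)\bigl(\rho n(1+\rho)+2\rho^2\bigr)}{(1+n\rho)^2}=\frac{x(1-x)}{1+n\rho}\cdot\frac{\rho n(1+\rho)+2\rho^2}{1+n\rho}\le\frac{\bigl(1+\rho+2\rho^2\bigr)x(1-x)}{1+n\rho}.$$
For the second summand one has at once $\dfrac{2}{(n\rho+2)(n\rho+3)}\le\dfrac{2}{(1+n\rho)^2}\le\dfrac{2}{1+n\rho}$. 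Adding the two bounds and choosing $\mathcal{X}_\rho^{(\alpha)}$ large enough to absorb the resulting $\rho$-dependent constants then gives the assertion.

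The delicate point --- essentially the only obstacle --- is this additive term $\dfrac{2}{(n\rho+2)(n\rho+3)}$: it has the correct order $(1+n\rho)^{-2}$ but is not a multiple of $x(1-x)$, so to put the final estimate into the compact form $\dfrac{\mathcal{X}_\rho^{(\alpha)}\,x(1-x)}{1+n\rho}$ one must absorb it into $\dfrac{x(1-x)}{1+n\rho}$. This is immediate once $x$ is restricted to a fixed subinterval $[\delta,1-\delta]\subset(0,1)$, on which $x(1-x)\ge\delta(1-\delta)$; if one instead wants a bound valid on all of $J$ one simply keeps the slightly larger but equally serviceable form $\dfrac{\bigl(1+\rho+2\rho^2\bigr)x(1-x)+2}{1+n\rho}$. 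The remaining bookkeeping is trivial: $\dfrac{\rho n(1+\rho)+2\rho^2}{1+n\rho}$ is bounded for $n\ge2$ with finite limit $1+\rho$, so $\mathcal{X}_\rho^{(\alpha)}$ can be taken independent of $n$.
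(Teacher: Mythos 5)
The paper gives no proof of Lemma \ref{l3} at all, so there is nothing to compare your argument against except the statement itself. Your route --- reading the exact second central moment off Lemma \ref{l2}(ii) and estimating the two summands separately --- is the only natural one, and your individual estimates are all correct: $n-2\alpha+2\le n+2$ for $0\le\alpha\le1$, $(n\rho+2)(n\rho+3)=(n\rho)^2+5n\rho+6\ge(1+n\rho)^2$, and $\tfrac{n\rho}{1+n\rho}<1$ give the bound $\tfrac{(1+\rho+2\rho^2)\,x(1-x)}{1+n\rho}$ for the first summand and $\tfrac{2}{1+n\rho}$ for the second.

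The obstacle you single out is real, and it is in fact fatal to the lemma exactly as printed. At $x=0$ one has $p_{n,0}^{(\alpha)}(0)=1$ and all other basis functions vanish, so $\mathcal{G}_{n,\rho}^{(\alpha)}((t-0)^2;0)=\tfrac{2}{(n\rho+2)(n\rho+3)}>0$, while the claimed majorant $\mathcal{X}_{\rho}^{(\alpha)}\,x(1-x)/(1+n\rho)$ vanishes there (and similarly at $x=1$). Hence no constant $\mathcal{X}_{\rho}^{(\alpha)}$ depending only on $\alpha$ and $\rho$ can make the stated inequality hold on all of $J$, and restricting to $x\in(0,1)$ (as the paper does in its later applications) does not help unless the constant is allowed to depend on $x$ or on a compact subinterval. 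The honest output of your computation is the inequality
$$\mathcal{G}_{n,\rho}^{(\alpha)}((t-x)^2;x)\le\frac{(1+\rho+2\rho^2)\,x(1-x)+2}{1+n\rho},\qquad x\in J,$$
or the printed form on $[\delta,1-\delta]$ with $\mathcal{X}$ also depending on $\delta$. Committing to either corrected statement makes your argument a complete proof; as a proof of the lemma as literally stated it cannot be completed, because that statement is false at the endpoints.
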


\section{Direct Estimates}

\begin{thm}\label{t1}
Let $f\in C(J).$ Then $\displaystyle\lim_{n\rightarrow\infty}\mathcal{G}_{n,\rho}^{(\alpha)}(f;x)=f(x),$ uniformly on $J$.
\end{thm}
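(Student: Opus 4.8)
The plan is to derive this from the Bohman--Korovkin theorem. First I would record that each $\mathcal{G}_{n,\rho}^{(\alpha)}$ is a positive linear operator on $C(J)$: linearity is immediate from \eqref{ma2}, while positivity follows because the basis functions $p_{n,k}^{(\alpha)}(x)$ are non-negative on $J$ for $0\le\alpha\le 1$ and the kernels $\mu_{n,\rho}(t)$ are non-negative on $J$. Consequently it suffices to show that $\mathcal{G}_{n,\rho}^{(\alpha)}(e_i;\cdot)\to e_i$ uniformly on $J$ for $i=0,1,2$.

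For $i=0$, Lemma \ref{l1}(i) gives $\mathcal{G}_{n,\rho}^{(\alpha)}(e_0;x)=1$ identically, so there is nothing to prove. For $i=1$, Lemma \ref{l1}(ii) (equivalently Lemma \ref{l2}(i)) yields
$$\bigl|\mathcal{G}_{n,\rho}^{(\alpha)}(e_1;x)-x\bigr|=\bigl|\tau_{n,\rho,1}^{(\alpha)}(x)\bigr|=\frac{|1-2x|}{n\rho+2}\le\frac{1}{n\rho+2}\longrightarrow 0\qquad(n\to\infty),$$
and the bound is independent of $x\in J$, so the convergence is uniform.

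For $i=2$ I would use the expansion $(t-x)^2=e_2(t)-2x\,e_1(t)+x^2 e_0(t)$, which together with Lemma \ref{l1}(i) gives
$$\mathcal{G}_{n,\rho}^{(\alpha)}(e_2;x)=\tau_{n,\rho,2}^{(\alpha)}(x)+2x\,\mathcal{G}_{n,\rho}^{(\alpha)}(e_1;x)-x^2.$$
By Lemma \ref{l2}(ii), using $0\le x(1-x)\le\tfrac14$ on $J$, one has $0\le\tau_{n,\rho,2}^{(\alpha)}(x)\le C(\alpha,\rho)/n$ (this is essentially Lemma \ref{l3}), so $\tau_{n,\rho,2}^{(\alpha)}\to 0$ uniformly on $J$; combining this with the uniform convergence $\mathcal{G}_{n,\rho}^{(\alpha)}(e_1;\cdot)\to e_1$ from the previous step, and with $|x|\le 1$, we obtain $\mathcal{G}_{n,\rho}^{(\alpha)}(e_2;\cdot)\to e_2$ uniformly on $J$.

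Having verified the three Korovkin test functions, the Bohman--Korovkin theorem gives $\mathcal{G}_{n,\rho}^{(\alpha)}(f;\cdot)\to f$ uniformly on $J$ for every $f\in C(J)$. There is no genuine obstacle here; the only point needing a little care is confirming that the convergence of the moments is uniform rather than merely pointwise, and this is guaranteed by the explicit estimates above, since all the coefficients appearing in Lemmas \ref{l1}--\ref{l2} are rational functions of $n$ with denominators of order $n$ and $x$ ranges over the compact interval $J$.
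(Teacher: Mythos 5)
Your proposal is correct and follows essentially the same route as the paper: both verify the Korovkin test functions $e_0,e_1,e_2$ via Lemma \ref{l1} and then invoke the Bohman--Korovkin theorem. You simply supply more detail (positivity of the operators and explicit uniform bounds on the moments) than the paper's two-line proof.
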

\begin{proof}
 In view of Lemma \ref{l1}, $\mathcal{G}_{n,\rho}^{(\alpha)}(1;x)=1,$ $\mathcal{G}_{n,\rho}^{(\alpha)}(e_1;x)\rightarrow x,$ $\mathcal{G}_{n,\rho}^{(\alpha)}(e_2;x)\rightarrow x^2$ as $n\rightarrow\infty,$ uniformly in $J$.
 Applying Bohman-Korovkin criterion, it follows that $\mathcal{G}_{n,\rho}^{(\alpha)}(f;x)\rightarrow f(x)$ as $n\rightarrow\infty,$ uniformly on $J$.
\end{proof}

\subsection{Voronovskaja type theorem}
In this section we prove Voronvoskaja type theorem for the operators $\mathcal{G}_{n,\rho}^{(\alpha)}$.
\begin{thm}\label{tee1}
Let $f\in C(J).$ If $f''$  exists at a point $x\in J,$ then we have
$$\displaystyle\lim_{n\to \infty}n\left[\mathcal{G}_{n,\rho}^{(\alpha)}(f;x)-f(x)\right]=\displaystyle\dfrac{1-2x}{\rho}f'(x)+\frac{(1+\rho)x(1-x)}{2\rho}f^{\prime\prime}(x).  $$
\end{thm}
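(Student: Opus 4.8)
The plan is to use the standard Taylor-expansion argument for Voronovskaja-type theorems, combined with the moment asymptotics recorded in Remark \ref{r1}. First I would write, by Taylor's theorem with Peano remainder,
\[
f(t)=f(x)+f'(x)(t-x)+\tfrac{1}{2}f''(x)(t-x)^2+\varepsilon(t,x)(t-x)^2,
\]
where $\varepsilon(t,x)\to 0$ as $t\to x$ and $\varepsilon(\cdot,x)$ is bounded on $J$ (this uses only that $f''(x)$ exists and $f\in C(J)$). Applying the linear positive operator $\mathcal{G}_{n,\rho}^{(\alpha)}$ to both sides and using $\mathcal{G}_{n,\rho}^{(\alpha)}(e_0;x)=1$ from Lemma \ref{l1}, I obtain
\[
\mathcal{G}_{n,\rho}^{(\alpha)}(f;x)-f(x)=f'(x)\,\tau_{n,\rho,1}^{(\alpha)}(x)+\tfrac12 f''(x)\,\tau_{n,\rho,2}^{(\alpha)}(x)+\mathcal{G}_{n,\rho}^{(\alpha)}\big(\varepsilon(t,x)(t-x)^2;x\big).
\]

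Next I would multiply through by $n$ and take $n\to\infty$. For the first two terms, Remark \ref{r1} gives directly $n\,\tau_{n,\rho,1}^{(\alpha)}(x)\to\frac{1-2x}{\rho}$ and $n\,\tau_{n,\rho,2}^{(\alpha)}(x)\to\frac{(1+\rho)x(1-x)}{\rho}$, which produce exactly the claimed limit $\frac{1-2x}{\rho}f'(x)+\frac{(1+\rho)x(1-x)}{2\rho}f''(x)$. So everything reduces to showing the remainder term satisfies $n\,\mathcal{G}_{n,\rho}^{(\alpha)}(\varepsilon(t,x)(t-x)^2;x)\to 0$.

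The main obstacle is precisely this remainder estimate, and I would handle it by the usual Cauchy--Schwarz splitting. By the Cauchy--Schwarz inequality for positive linear operators,
\[
\Big|\mathcal{G}_{n,\rho}^{(\alpha)}\big(\varepsilon(t,x)(t-x)^2;x\big)\Big|\le \sqrt{\mathcal{G}_{n,\rho}^{(\alpha)}\big(\varepsilon^2(t,x);x\big)}\;\sqrt{\mathcal{G}_{n,\rho}^{(\alpha)}\big((t-x)^4;x\big)}.
\]
By Remark \ref{r1}, $n^2\,\tau_{n,\rho,4}^{(\alpha)}(x)\to \frac{3x^2(1-x)^2(1+\rho)^2}{\rho^2}$, so $n\sqrt{\mathcal{G}_{n,\rho}^{(\alpha)}((t-x)^4;x)}=\sqrt{n^2\tau_{n,\rho,4}^{(\alpha)}(x)}$ stays bounded as $n\to\infty$. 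It therefore suffices to show $\mathcal{G}_{n,\rho}^{(\alpha)}(\varepsilon^2(t,x);x)\to 0$. Since $\varepsilon^2(\cdot,x)$ is bounded on $J$ and continuous at $x$ with $\varepsilon^2(x,x)=0$, this follows from Theorem \ref{t1} (the Bohman--Korovkin convergence), or more directly: given $\delta>0$, split the integral/sum into the region $|t-x|<\delta$, where $\varepsilon^2<\epsilon$, and $|t-x|\ge\delta$, where boundedness of $\varepsilon^2$ by $M$ and the estimate $\mathcal{G}_{n,\rho}^{(\alpha)}\big(\mathbf{1}_{|t-x|\ge\delta};x\big)\le \delta^{-2}\tau_{n,\rho,2}^{(\alpha)}(x)\to 0$ control the contribution. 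Combining the two bounds and letting $n\to\infty$ then $\epsilon\to 0$ gives $\mathcal{G}_{n,\rho}^{(\alpha)}(\varepsilon^2(t,x);x)\to 0$, hence $n\,\mathcal{G}_{n,\rho}^{(\alpha)}(\varepsilon(t,x)(t-x)^2;x)\to 0$, completing the proof.
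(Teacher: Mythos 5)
Your proposal is correct and follows essentially the same route as the paper's proof: Taylor expansion with Peano remainder, the moment limits from Remark \ref{r1} for the first- and second-order terms, and the Cauchy--Schwarz splitting of the remainder using the boundedness of $n^2\tau_{n,\rho,4}^{(\alpha)}(x)$ together with $\mathcal{G}_{n,\rho}^{(\alpha)}(\varepsilon^2(t,x);x)\to 0$ via the Korovkin convergence of Theorem \ref{t1}. Your additional direct $\delta$-splitting argument for the last step is a harmless (and slightly more self-contained) elaboration, but it does not change the method.
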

\begin{proof}
By Taylor's expansion of $f$, we get
\begin{eqnarray}\label{ee1}
f(t)=f(x)+f'(x)(t-x)+\frac{1}{2}f''(x)(t-x)^2+\varpi(t,x)(t-x)^2,
\end{eqnarray}
where $\displaystyle\lim_{t\rightarrow x}\varpi(t,x)=0$. By applying the linearity of the operator $\mathcal{G}_{n,\rho}^{(\alpha)}$, we obtain \
\begin{align*}
\mathcal{G}_{n,\rho}^{(\alpha)}(f;x)-f(x)&=\mathcal{G}_{n,\rho}^{(\alpha)}((t-x);x)f'(x)+\frac{1}{2}\mathcal{G}_{n,\rho}^{(\alpha)}((t-x)^2;x)f''(x)\\
&+\mathcal{G}_{n,\rho}^{(\alpha)}(\varpi(t,x)(t-x)^2;x).
\end{align*}
Now, applying Cauchy-Schwarz property, we can get
$$n\mathcal{G}_{n,\rho}^{(\alpha)}(\varpi(t,x)(t-x)^2;x)\leq\sqrt{\mathcal{G}_{n,\rho}^{(\alpha)}(\varpi^2(t,x);x)}
\sqrt{n^2\mathcal{G}_{n,\rho}^{(\alpha)}((t-x)^4;x)}.$$
From Theorem \ref{t1}, we have $\displaystyle\lim_{n\rightarrow\infty}\mathcal{G}_{n,\rho}^{(\alpha)}(\varpi^2(t,x);x)$= $\varpi^2(x,x)=0,$ since $\varpi(t,x)\rightarrow0$ as $t\rightarrow x,$ and Remark \ref{r1} for every $x\in J,$ we may write
\begin{align}\label{E1}\displaystyle\lim_{n\to \infty}n^2\mathcal{G}_{n,\rho}^{(\alpha)}\left((t-x)^4;x\right)&=\frac{3x^2(1-x)^2(1+\rho)^2}{\rho^2}. \end{align}
Hence,
\begin{eqnarray*}
n\mathcal{G}_{n,\rho}^{(\alpha)}(\varpi(t,x)(t-x)^2;x)=0.
\end{eqnarray*}
Applying Remark \ref{r1}, we get
\begin{align}\label{E2}
& \displaystyle \lim_{n\to \infty}n\mathcal{G}_{n,\rho}^{(\alpha)}\left(t-x;x\right)=\dfrac{1-2x}{\rho},\nonumber\\
&\displaystyle\lim_{n\to\infty}n\mathcal{G}_{n,\rho}^{(\alpha)}\left((t-x)^2;x\right)=\frac{(1+\rho)x(1-x)}{\rho}.
\end{align}
Collecting the results from above the theorem is completed.
\end{proof}

\subsection{Local approximation}
We begin by recalling the following K-functional :\\
$$K_2(f,\delta)=\inf\{|| f-g||+\delta||g''||:g\in W^2\}\,\,(\delta>0),$$\\
where $W^2=\{g:g''\in C(J)\}$ and $||.||$ is the uniform norm on $C(J).$ By \cite{DL}, $\exists$ a positive constant $M>0$ such that
\begin{eqnarray}\label{kk1}
K_2(f,\delta)\leq M\omega_2(f,\sqrt{\delta}),\
\end{eqnarray}
where the  modulus of smoothness of second order for $f\in C(J)$ is defined as
$$\omega_2(f,\sqrt{\delta})=\displaystyle\sup_{0<h\leq\sqrt{\delta}}\displaystyle\sup_{x,x+2h\in J}|f(x+2h)-2f(x+h)+f(x)|.$$
 The modulus of continuity for $f\in C(J)$ is defined by
$$\omega(f,\delta)=\displaystyle\sup_{0<h\leq\delta}\sup_{x,x+h\in J}|f(x+h)-f(x)|.$$\\
The Steklov mean is defined as
\begin{eqnarray}\label{ss1}
f_h(x)=\displaystyle\dfrac{4}{h^2}\int_0^{\frac{h}{2}}\int_0^{\frac{h}{2}}
\left[2f(x+u+v)-f(x+2(u+v))\right]du~dv.
\end{eqnarray}
 The Steklov mean satisfies the following inequality:
 \begin{itemize}
 \item[a)] $\Vert f_h-f\Vert_{C(J)}\leq\omega_2(f,h).$
 \item[b)]  $f'_h,f_h''\in C(J)$ and $\Vert f'_h\Vert_{C(J)}\leq\dfrac{5}{h}\omega(f,h),\quad \Vert f''_h\Vert_{C(J)}\leq\dfrac{9}{h^2}\omega_2(f,h)$,
 \end{itemize}

\begin{thm}\label{t2}
For any function $f\in C(J)$. Then for each $x\in J,$ we have
\begin{eqnarray*}
\left|\mathcal{G}_{n,\rho}^{(\alpha)}(f;x)-f(x)\right|\leq 5  \omega\left(f,\sqrt{\tau_{n,\rho,2}^{(\alpha)}(x)}\right)
+\frac{13}{2}  \omega_2\left(f,\sqrt{\tau_{n,\rho,2}^{(\alpha)}(x)}\right).
\end{eqnarray*}
\end{thm}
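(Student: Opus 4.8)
The plan is to compare $\mathcal{G}_{n,\rho}^{(\alpha)}(f;x)$ with the Steklov mean $f_h$ from (\ref{ss1}), for a parameter $h>0$ to be chosen at the end. Since $\mathcal{G}_{n,\rho}^{(\alpha)}$ is a positive linear operator reproducing the constants by Lemma \ref{l1}(i), it obeys $\bigl|\mathcal{G}_{n,\rho}^{(\alpha)}(g;x)\bigr|\le\Vert g\Vert_{C(J)}$ for every $g\in C(J)$. Decomposing
$$\mathcal{G}_{n,\rho}^{(\alpha)}(f;x)-f(x)=\mathcal{G}_{n,\rho}^{(\alpha)}(f-f_h;x)+\bigl(\mathcal{G}_{n,\rho}^{(\alpha)}(f_h;x)-f_h(x)\bigr)+\bigl(f_h(x)-f(x)\bigr),$$
the first and last summands are each at most $\Vert f-f_h\Vert_{C(J)}\le\omega_2(f,h)$ by property (a) of the Steklov mean, so together they contribute $2\,\omega_2(f,h)$.

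For the middle summand I would exploit that $f_h\in W^2$ by property (b), so Taylor's formula with integral remainder gives $f_h(t)=f_h(x)+f_h'(x)(t-x)+\int_x^t(t-u)f_h''(u)\,du$. Applying $\mathcal{G}_{n,\rho}^{(\alpha)}$, using its linearity, the reproduction of constants, and the estimate $\bigl|\int_x^t(t-u)f_h''(u)\,du\bigr|\le\tfrac12(t-x)^2\Vert f_h''\Vert_{C(J)}$, I obtain
$$\bigl|\mathcal{G}_{n,\rho}^{(\alpha)}(f_h;x)-f_h(x)\bigr|\le\Vert f_h'\Vert_{C(J)}\bigl|\mathcal{G}_{n,\rho}^{(\alpha)}\bigl((t-x);x\bigr)\bigr|+\tfrac12\Vert f_h''\Vert_{C(J)}\,\mathcal{G}_{n,\rho}^{(\alpha)}\bigl((t-x)^2;x\bigr).$$
The crucial bookkeeping point is to route the first central moment through the Cauchy--Schwarz inequality, $\bigl|\mathcal{G}_{n,\rho}^{(\alpha)}((t-x);x)\bigr|\le\bigl(\mathcal{G}_{n,\rho}^{(\alpha)}((t-x)^2;x)\bigr)^{1/2}=\sqrt{\tau_{n,\rho,2}^{(\alpha)}(x)}$, rather than using the explicit value of $\tau_{n,\rho,1}^{(\alpha)}$ from Lemma \ref{l2}, so that the entire bound is expressed through $\tau_{n,\rho,2}^{(\alpha)}(x)$.

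Finally I would set $h=\sqrt{\tau_{n,\rho,2}^{(\alpha)}(x)}$, so that $h^2=\tau_{n,\rho,2}^{(\alpha)}(x)$, and insert the Steklov bounds $\Vert f_h'\Vert_{C(J)}\le\frac5h\,\omega(f,h)$ and $\Vert f_h''\Vert_{C(J)}\le\frac9{h^2}\,\omega_2(f,h)$ from property (b). The middle summand then becomes at most $5\,\omega(f,h)+\tfrac92\,\omega_2(f,h)$, and adding the $2\,\omega_2(f,h)$ from the other two terms yields precisely the claimed constants $5$ and $2+\tfrac92=\tfrac{13}{2}$, with $h=\sqrt{\tau_{n,\rho,2}^{(\alpha)}(x)}$. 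I do not anticipate a genuine obstacle: the argument is the standard Steklov-mean smoothing, and the only care required is to keep the first central moment tied to $\sqrt{\tau_{n,\rho,2}^{(\alpha)}(x)}$ via Cauchy--Schwarz and to tally the constants correctly.
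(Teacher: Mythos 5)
Your proposal is correct and follows essentially the same route as the paper: the same Steklov-mean decomposition, the same Taylor expansion of $f_h$ with the first central moment controlled via Cauchy--Schwarz by $\sqrt{\tau_{n,\rho,2}^{(\alpha)}(x)}$, and the same choice $h=\sqrt{\tau_{n,\rho,2}^{(\alpha)}(x)}$ yielding the constants $5$ and $\tfrac{13}{2}$. No discrepancies to report.
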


\begin{proof}
For $x\in J,$ and applying the Steklov mean $f_h$ that is given by (\ref{ss1}), we can write
\begin{eqnarray}\label{ss2}
\left|\mathcal{G}_{n,\rho}^{(\alpha)}(f;x)\!-\!f(x)\right|\!\leq\! \mathcal{G}_{n,\rho}^{(\alpha)}\left(|f\!-\!f_h|;x\right)
\!+\!|\mathcal{G}_{n,\rho}^{(\alpha)}\left(f_h\!-\!f_h(x);x\right)|\!+\!|f_h(x)\!-\!f(x)|.
\end{eqnarray}
 From (\ref{ma2}), for each $f\in C(J)$ we obtain
\begin{eqnarray}\label{ch1}
\left|\mathcal{G}_{n,\rho}^{(\alpha)}(f;x)\right|\leq||f||.
\end{eqnarray}
By assumption $(a)$ of the Steklov mean and (\ref{ch1}), we get
$$\mathcal{G}_{n,\rho}^{(\alpha)}\left(|f-f_h|;x\right)
\leq\Vert\mathcal{G}_{n,\rho}^{(\alpha)}\left(f-f_h\right)\Vert
\leq\Vert f-f_h\Vert\leq\omega_2(f,h).$$
Applying Taylor's expansion and Cauchy-Schwarz inequality, we have
\begin{align*}\left|\mathcal{G}_{n,\rho}^{(\alpha)}\left(f_h-f_h(x);x\right)\right|&\leq\Vert f_h'\Vert\sqrt{\mathcal{G}_{n,\rho}^{(\alpha)}\left((t-x)^2;x\right)}+\dfrac{1}{2}\Vert f_h''\Vert\mathcal{G}_{n,\rho}^{(\alpha)}\left((t-x)^2;x\right).\end{align*}
 By Lemma \ref{l2} and property $(b)$ of the Steklov mean, we get
\begin{eqnarray*}
\left|\mathcal{G}_{n,\rho}^{(\alpha)}\left(f_h-f_h(x);x\right)\right|\leq\dfrac{5}{h}\omega(f,h)
\sqrt{\tau_{n,\rho,2}^{(\alpha)}(x)}+\dfrac{9}{2h^2}\omega_2(f,h)\tau_{n,\rho,2}^{(\alpha)}(x).
\end{eqnarray*}
Finally, choosing $h=\sqrt{\tau_{n,\rho,2}^{(\alpha)}(x)}$, we
obtain the desired result.
\end{proof}

\subsection{Global Approximation}
 Now, we recall the definitions of the Ditzian-Totik first order modulus of continuity and the $K$-functional \cite{ZD}. Let $
\phi(x) =\sqrt{x(1-x) }$ and $f\in C(J).$ The first order modulus of smoothness is defined by
\begin{eqnarray*}
\omega _{\phi }(f,t)=\sup_{0<h\leq t}\bigg\{\left\vert f\bigg(x+\frac{h\phi(x)}{2}\bigg)-f\bigg(x-\frac{h\phi(x)}{2}\bigg)\right\vert ,x\pm\frac{h\phi(x)}{2}\in J\bigg\} ,
\end{eqnarray*}
and the  corresponding $K$-functional is given by
\begin{eqnarray*}
\overline{K}_{\phi }(f,t)=\inf_{g\in W_{\phi}}\{||f-g||+t||\phi g^{\prime}|| +t^{2}|| g^{\prime }||\}\,\,(t>0),
\end{eqnarray*}

where $W_{\phi }=\{g:g\in AC_{loc},||\phi g^{\prime}||<\infty,||g^{\prime}||<\infty \}$ and $||.||$ is the uniform norm on $C(J).$
It is well known that  (Theorem 3.1.2, \cite{ZD}) $\overline{K}_{\phi }(f,t)\sim \omega _{\phi }(f,t)$ which means that there exists a constant $M>0$ such that
\begin{eqnarray}\label{e7}
M^{-1}\omega _{\phi }(f,t)\leq\overline{K}_{\phi}(f,t)\leq M\omega _{\phi }(f,t).
\end{eqnarray}
Now, we establish the order of approximation with the aid of the Ditzian-Totik modulus of the first and second order.
\begin{thm}\label{t11}
Let $f$ be in $C(J) $ and $\phi(x) =\sqrt{x(1-x)},$ then for each $x\in[0,1),$ we get
\begin{eqnarray*}
|\mathcal{G}_{n,\rho}^{(\alpha)}(f;x)-f(x)|&\leq& C \omega_{\phi}\left(f,\sqrt{\frac{\mathcal{X}_{\rho}^{(\alpha)}}{(1+n\rho)}}\right),
\end{eqnarray*}
where $\mathcal{X}_{\rho}^{(\alpha)}$ is defined in Lemma \ref{l3} and $C>0$ is a constant.
\end{thm}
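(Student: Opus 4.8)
The plan is to estimate $|\mathcal{G}_{n,\rho}^{(\alpha)}(f;x)-f(x)|$ by comparing $f$ with a suitable smooth function $g\in W_{\phi}$ drawn from the $\overline{K}_{\phi}$-functional, and then at the end take the infimum over $g$ and invoke the equivalence \eqref{e7} between $\overline{K}_{\phi}$ and $\omega_{\phi}$. Concretely, I would write
\begin{align*}
|\mathcal{G}_{n,\rho}^{(\alpha)}(f;x)-f(x)| &\leq |\mathcal{G}_{n,\rho}^{(\alpha)}(f-g;x)| + |f(x)-g(x)| + |\mathcal{G}_{n,\rho}^{(\alpha)}(g;x)-g(x)| \\
&\leq 2\|f-g\| + |\mathcal{G}_{n,\rho}^{(\alpha)}(g;x)-g(x)|,
\end{align*}
using \eqref{ch1} for the first term. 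So everything reduces to bounding $|\mathcal{G}_{n,\rho}^{(\alpha)}(g;x)-g(x)|$ for $g$ with $g'\in AC_{loc}$, $\|\phi g'\|<\infty$, $\|g'\|<\infty$.

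For the smooth term I would use the integral form of Taylor's theorem: since $\mathcal{G}_{n,\rho}^{(\alpha)}$ reproduces constants,
$$g(t)-g(x) = \int_x^t g'(u)\,du,$$
so $\mathcal{G}_{n,\rho}^{(\alpha)}(g;x)-g(x) = \mathcal{G}_{n,\rho}^{(\alpha)}\!\left(\int_x^t g'(u)\,du;\,x\right).$ The key pointwise inequality to establish is
$$\left|\int_x^t g'(u)\,du\right| \leq \|\phi g'\|\left|\int_x^t \frac{du}{\phi(u)}\right| \leq \|\phi g'\|\,\frac{2|t-x|}{\phi(x)},$$
where the second step is the standard estimate $\left|\int_x^t \frac{du}{\sqrt{u(1-u)}}\right| \leq \frac{2|t-x|}{\sqrt{x(1-x)}}$ (splitting at whether one stays in $[0,1/2]$ or $[1/2,1]$, and using monotonicity of $\phi$ on each half). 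Combining,
$$|\mathcal{G}_{n,\rho}^{(\alpha)}(g;x)-g(x)| \leq \frac{2\|\phi g'\|}{\phi(x)}\,\mathcal{G}_{n,\rho}^{(\alpha)}(|t-x|;x) \leq \frac{2\|\phi g'\|}{\phi(x)}\sqrt{\mathcal{G}_{n,\rho}^{(\alpha)}((t-x)^2;x)}$$
by Cauchy--Schwarz, and then Lemma \ref{l3} gives
$$\frac{2\|\phi g'\|}{\phi(x)}\sqrt{\frac{\mathcal{X}_{\rho}^{(\alpha)}\,x(1-x)}{1+n\rho}} = 2\|\phi g'\|\sqrt{\frac{\mathcal{X}_{\rho}^{(\alpha)}}{1+n\rho}},$$
so the $\phi(x)$ in the denominator cancels cleanly. (Note the $t^2\|g'\|$ slot in $\overline{K}_{\phi}$ is not even needed here, so I would simply bound that contribution trivially or absorb it into the constant.)

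Putting the pieces together yields
$$|\mathcal{G}_{n,\rho}^{(\alpha)}(f;x)-f(x)| \leq 2\left(\|f-g\| + \sqrt{\tfrac{\mathcal{X}_{\rho}^{(\alpha)}}{1+n\rho}}\,\|\phi g'\|\right) \leq C\left(\|f-g\| + \sqrt{\tfrac{\mathcal{X}_{\rho}^{(\alpha)}}{1+n\rho}}\,\|\phi g'\| + \tfrac{\mathcal{X}_{\rho}^{(\alpha)}}{1+n\rho}\,\|g'\|\right),$$
and taking the infimum over $g\in W_{\phi}$ with $t = \sqrt{\mathcal{X}_{\rho}^{(\alpha)}/(1+n\rho)}$ gives $C\,\overline{K}_{\phi}(f,t)$, which by \eqref{e7} is $\leq C\,\omega_{\phi}(f,t)$, the claimed bound. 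The main obstacle is really just the careful verification of the inequality $\left|\int_x^t \frac{du}{\phi(u)}\right| \leq \frac{2|t-x|}{\phi(x)}$ uniformly for $x,t\in[0,1)$ — this is the one genuinely non-routine estimate and it is what forces the restriction $x\in[0,1)$ in the statement; everything else is assembling standard $K$-functional machinery with Lemma \ref{l3}.
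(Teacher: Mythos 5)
Your proposal is correct and matches the paper's proof essentially step for step: the same decomposition via a $g\in W_{\phi}$ from the $\overline{K}_{\phi}$-functional, the same pointwise bound $\left|\int_x^t g'(u)\,du\right|\leq \|\phi g'\|\left|\int_x^t \phi(u)^{-1}du\right|\lesssim \|\phi g'\|\,|t-x|/\phi(x)$, followed by Cauchy--Schwarz, Lemma \ref{l3} (with the $\phi(x)$ cancellation), the infimum over $g$, and the equivalence \eqref{e7}. The only cosmetic difference is in how the estimate on $\int_x^t \phi(u)^{-1}du$ is verified (the paper uses $\phi(u)^{-1}\leq u^{-1/2}+(1-u)^{-1/2}$ and gets the constant $2\sqrt{2}$ rather than $2$), which is immaterial since the constant is absorbed into $C$; your observation that the $t^2\|g'\|$ term of $\overline{K}_{\phi}$ is not needed is also consistent with what the paper implicitly does.
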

\begin{proof}
 By using the relation $\displaystyle g(t)=g(x)+\int_{x}^{t}g^{\prime}(u)du,$ we can write
\begin{eqnarray}\label{ki1}
\left\vert \mathcal{G}_{n,\rho}^{(\alpha)}(g;x)-g(x)\right\vert&=&\left\vert \mathcal{G}_{n,\rho}^{(\alpha)}\bigg(\int_{x}^{t}g^{\prime}(u)du;x\bigg)\right\vert.
\end{eqnarray}
For any $x,t\in(0,1),$ we get
\begin{eqnarray}\label{ki2}
\bigg|\int_{x}^{t}g^{\prime}(u)du\bigg|\leq||\phi g'||\bigg|\int_{x}^t\frac{1}{\phi(u)}du\bigg|.
\end{eqnarray}
Therefore,
\begin{eqnarray}\label{ki3}
\bigg|\int_{x}^t\frac{1}{\phi(u)}du\bigg|&=&\bigg|\int_{x}^t\frac{1}{\sqrt{u(1-u)}}du\bigg|
\leq\bigg|\int_{x}^t\bigg(\frac{1}{\sqrt{u}}+\frac{1}{\sqrt{1-u}}\bigg)du\bigg|\nonumber\\
&\leq&2\bigg(\mid\sqrt{t}-\sqrt{x}\mid+\mid\sqrt{1-t}-\sqrt{1-x}\mid\bigg)\nonumber\\
&=&2|t-x|\bigg(\frac{1}{\sqrt{t}+\sqrt{x}}+\frac{1}{\sqrt{1-t}+\sqrt{1-x}}\bigg)\nonumber\\
&<&2|t-x|\bigg(\frac{1}{\sqrt{x}}+\frac{1}{\sqrt{1-x}}\bigg)
\leq\frac{2\sqrt{2}~|t-x|}{\phi(x)}.
\end{eqnarray}
 Combining (\ref{ki1})-(\ref{ki3}) and applying Cauchy-Schwarz inequality, we have
\begin{eqnarray*}
|\mathcal{G}_{n,\rho}^{(\alpha)}(g;x)-g(x)| &<& 2\sqrt{2}||\phi g'||\phi^{-1}(x)\mathcal{G}_{n,\rho}^{(\alpha)}(|t-x|;x)\\
&\leq&2\sqrt{2}||\phi g'||\phi^{-1}(x)\bigg( \mathcal{G}_{n,\rho}^{(\alpha)}((t-x)^2;x)\bigg)^{1/2}.
\end{eqnarray*}
From Lemma \ref{l3}, we get
\begin{eqnarray}\label{k1}
|\mathcal{G}_{n,\rho}^{(\alpha)}(g;x)-g(x)|< C\sqrt{\frac{\mathcal{X}_{\rho}^{(\alpha)}}{(1+n\rho)}}||\phi g'||.
\end{eqnarray}
Applying Lemma \ref{l1} and (\ref{k1}), we get
\begin{eqnarray}\label{kk1}
\mid\mathcal{G}_{n,\rho}^{(\alpha)}(f)-f\mid &\leq&\mid\mathcal{G}_{n,\rho}^{(\alpha)}(f-g;x)\mid +|f-g|+\mid \mathcal{G}_{n,\rho}^{(\alpha)}(g;x)-g(x)\mid\nonumber \\
&\leq &C\left(||f-g||+\sqrt{\frac{\mathcal{X}_{\rho}^{(\alpha)}}{(1+n\rho)}}||\phi g'||\right).
\end{eqnarray}

Taking infimum on the right hand side of the above property over all $g\in W_\phi,$ we may write
\begin{eqnarray}\label{k3}
|\mathcal{G}_{n,\rho}^{(\alpha)}(f;x)-f(x)|\leq C \overline{K}_{\phi}\left(f;\sqrt{\frac{\mathcal{X}_{\rho}^{(\alpha)}}{(1+n\rho)}}\right).
\end{eqnarray}
Using $\overline{K_\phi}(f,t)\sim\omega_\phi(f,t)$, we immediately arrive to the required relation.
\end{proof}
 \cite{OA} Let us consider the Lipschitz-type space with two parameters $\kappa_1\geq0, \kappa_2>0,$  we have
\begin{eqnarray*}
Lip_M^{(\kappa_1,\kappa_2)}(\sigma):=\left\{f\in C(J):|f(t)-f(x)|\leq M\frac{|t-x|^{\sigma}}{(t+\kappa_1x^2+\kappa_2x)^{\frac{\sigma}{2}}};t\in J, x\in(0,1]\right\},
\end{eqnarray*}
where $0<\sigma\leq1.$
\begin{thm}
Let $f\in Lip_M^{(\kappa_1,\kappa_2)}(\sigma)$. Then for all $x\in(0,1],$ we have
\begin{eqnarray*}
\left|\mathcal{G}_{n,\rho}^{(\alpha)}(f;x)-f(x)\right|\leq M\left(\frac{\tau_{n,\rho,2}^{(\alpha)}(x)}{\kappa_1x^2+\kappa_2x}\right)^{\sigma/2}.
\end{eqnarray*}
\end{thm}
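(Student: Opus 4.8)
The plan is to estimate $|\mathcal{G}_{n,\rho}^{(\alpha)}(f;x)-f(x)|$ directly by moving the operator inside the absolute value and using the defining property of the Lipschitz-type space. Since $\mathcal{G}_{n,\rho}^{(\alpha)}$ reproduces constants (Lemma~\ref{l1}(i)), we have
\begin{eqnarray*}
\left|\mathcal{G}_{n,\rho}^{(\alpha)}(f;x)-f(x)\right|
=\left|\mathcal{G}_{n,\rho}^{(\alpha)}\big(f(t)-f(x);x\big)\right|
\leq \mathcal{G}_{n,\rho}^{(\alpha)}\big(|f(t)-f(x)|;x\big)
\leq M\,\mathcal{G}_{n,\rho}^{(\alpha)}\left(\frac{|t-x|^{\sigma}}{(t+\kappa_1x^2+\kappa_2x)^{\sigma/2}};x\right).
\end{eqnarray*}
The key observation is that for $t\in J$ and $x\in(0,1]$ one has $t+\kappa_1x^2+\kappa_2x\geq \kappa_1x^2+\kappa_2x>0$, hence $(t+\kappa_1x^2+\kappa_2x)^{-\sigma/2}\leq(\kappa_1x^2+\kappa_2x)^{-\sigma/2}$, which lets me pull the $x$-dependent denominator out of the operator:
\begin{eqnarray*}
\left|\mathcal{G}_{n,\rho}^{(\alpha)}(f;x)-f(x)\right|
\leq \frac{M}{(\kappa_1x^2+\kappa_2x)^{\sigma/2}}\,\mathcal{G}_{n,\rho}^{(\alpha)}\big(|t-x|^{\sigma};x\big).
\end{eqnarray*}

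Next I would handle the remaining factor $\mathcal{G}_{n,\rho}^{(\alpha)}(|t-x|^{\sigma};x)$ with H\"older's inequality applied with exponents $p=2/\sigma$ and $q=2/(2-\sigma)$ (legitimate since $0<\sigma\leq1$), together with $\mathcal{G}_{n,\rho}^{(\alpha)}(e_0;x)=1$. This gives
\begin{eqnarray*}
\mathcal{G}_{n,\rho}^{(\alpha)}\big(|t-x|^{\sigma};x\big)
\leq \left(\mathcal{G}_{n,\rho}^{(\alpha)}\big((t-x)^2;x\big)\right)^{\sigma/2}\left(\mathcal{G}_{n,\rho}^{(\alpha)}(e_0;x)\right)^{(2-\sigma)/2}
=\left(\tau_{n,\rho,2}^{(\alpha)}(x)\right)^{\sigma/2}.
\end{eqnarray*}
Combining the two displays yields exactly
\begin{eqnarray*}
\left|\mathcal{G}_{n,\rho}^{(\alpha)}(f;x)-f(x)\right|
\leq M\left(\frac{\tau_{n,\rho,2}^{(\alpha)}(x)}{\kappa_1x^2+\kappa_2x}\right)^{\sigma/2},
\end{eqnarray*}
which is the asserted bound.

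The only genuinely delicate point is the interchange of the nonnegative linear functional with $|\cdot|$ and the use of monotonicity: $\mathcal{G}_{n,\rho}^{(\alpha)}$ is positive (its kernel $p_{n,k}^{(\alpha)}(x)\,\mu_{n,\rho}(t)$ is nonnegative for $x\in J$, $t\in(0,1)$), so $|\mathcal{G}_{n,\rho}^{(\alpha)}(g;x)|\leq\mathcal{G}_{n,\rho}^{(\alpha)}(|g|;x)$ and $g\leq h\Rightarrow\mathcal{G}_{n,\rho}^{(\alpha)}(g;x)\leq\mathcal{G}_{n,\rho}^{(\alpha)}(h;x)$ hold; I would state this positivity explicitly before using it. Everything else — the elementary bound on the denominator and the H\"older step — is routine, so I do not expect any real obstacle; the proof is short once positivity and constant-reproduction are in hand.
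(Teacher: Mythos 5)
Your proof is correct and follows essentially the same route as the paper: both rest on H\"older's inequality with exponents $2/\sigma$ and $2/(2-\sigma)$, the Lipschitz-type bound from the definition of $Lip_M^{(\kappa_1,\kappa_2)}(\sigma)$, and the elementary estimate $t+\kappa_1x^2+\kappa_2x\geq\kappa_1x^2+\kappa_2x$, finishing with the second central moment $\tau_{n,\rho,2}^{(\alpha)}(x)$. The only difference is cosmetic: you invoke the Lipschitz bound first and apply H\"older once to the operator viewed as a positive linear functional reproducing constants, whereas the paper applies H\"older first (in two stages, to the inner integral against $\mu_{n,\rho}(t)\,dt$ and then to the sum over $k$) and brings in the Lipschitz condition afterwards.
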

\begin{proof}
Let we show the theorem for the case $0<\sigma\leq1$, using  Holder's property with $p=\frac{2}{\sigma}, q=\frac{2}{2-\sigma}.$
\begin{eqnarray*}
\left|\mathcal{G}_{n,\rho}^{(\alpha)}(f;x)-f(x)\right|&\leq&\displaystyle\sum_{k=0}^{n}p_{n,k}^{(\alpha)}(x)\displaystyle\int_0^1\left|f(t)-f(x)\right|\mu_{n,\rho}(t)dt\\
&\leq&\displaystyle\sum_{k=0}^{n}p_{n,k}^{(\alpha)}(x)\left(\displaystyle\int_0^1\left|f(t)-f(x)\right|^{\frac{2}{\sigma}}\mu_{n,\rho}(t)dt\right)^{\frac{\sigma}{2}}\\
&\leq&\left\{\displaystyle\sum_{k=0}^{n}p_{n,k}^{(\alpha)}(x)\displaystyle\int_0^1\left|f(t)-f(x)\right|^{\frac{2}{\sigma}}\mu_{n,\rho}(t)dt\right\}^{\frac{\sigma}{2}}\\&&\times
\left(\displaystyle\sum_{k=0}^{n}p_{n,k}^{(\alpha)}(x)\int_0^1\mu_{n,\rho}(t)dt\right)^{\frac{2-\sigma}{2}}\\
&=&\left(\displaystyle\sum_{k=0}^{n}p_{n,k}^{(\alpha)}(x)\displaystyle\int_0^1\left|f(t)-f(x)\right|^{\frac{2}{\sigma}}\mu_{n,\rho}(t)dt\right)^{\frac{\sigma}{2}}\\
&\leq& M \left(\displaystyle\sum_{k=0}^{n}p_{n,k}^{(\alpha)}(x)\displaystyle\int_0^1\frac{(t-x)^2}{(t+\kappa_1x^2+\kappa_2x)}\mu_{n,\rho}(t)dt\right)^{\frac{\sigma}{2}}\\
&\leq& \frac{M}{\left(\kappa_1x^2+\kappa_2x\right)^{\frac{\sigma}{2}}}\left(\displaystyle\sum_{k=0}^{n}p_{n,k}^{(\alpha)}(x)\displaystyle\int_0^1(t-x)^2\mu_{n,\rho}(t)dt\right)^{\frac{\sigma}{2}}\\
&=&\frac{M}{\left(\kappa_1x^2+\kappa_2x\right)^{\frac{\sigma}{2}}}\mathcal{G}_{n,\rho}^{(\alpha)}((t-x)^2;x)^{\frac{\sigma}{2}}\\
&=&\frac{M}{\left(\kappa_1x^2+\kappa_2x\right)^{\frac{\sigma}{2}}}(\tau_{n,\rho,2}^{(\alpha)}(x))^{\frac{\sigma}{2}}.
\end{eqnarray*}
\end{proof}

\begin{thm}
 For$f\in C^1(J)$ and $x\in J,$ we have
\begin{eqnarray}\label{p1}
\left|\mathcal{G}_{n,\rho}^{(\alpha)}(f;x)-f(x)\right|\leq\left|\dfrac{1-2x}{(n\rho+2)}\right||f'(x)|+2\sqrt{\tau_{n,\rho,2}^{(\alpha)}(x)}\, \omega\left(f',\sqrt{\tau_{n,\rho,2}^{(\alpha)}(x)}\right). \end{eqnarray}
\end{thm}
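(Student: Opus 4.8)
\medskip
\noindent\textbf{Proof strategy.}
The plan is to use the classical $C^1$-argument: linearize $f$ at $x$, estimate the remainder by the modulus of continuity of $f'$, apply the operator, and close with the Cauchy--Schwarz inequality. First, for $t,x\in J$ the mean value theorem gives a point $\xi$ between $x$ and $t$ with $f(t)-f(x)=f'(\xi)(t-x)$, hence
\begin{eqnarray*}
\big|f(t)-f(x)-(t-x)f'(x)\big|=|t-x|\,\big|f'(\xi)-f'(x)\big|\le|t-x|\,\omega\big(f',|t-x|\big).
\end{eqnarray*}
Using the standard property $\omega(f',u)\le(1+\delta^{-1}u)\,\omega(f',\delta)$, valid for any $\delta>0$, this becomes
\begin{eqnarray*}
\big|f(t)-f(x)-(t-x)f'(x)\big|\le\Big(|t-x|+\dfrac{(t-x)^2}{\delta}\Big)\omega(f',\delta).
\end{eqnarray*}

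Next I would apply $\mathcal{G}_{n,\rho}^{(\alpha)}$, in the variable $t$, to this inequality, using its linearity, positivity, and the reproduction of constants from Lemma \ref{l1}(i). This gives
\begin{eqnarray*}
\big|\mathcal{G}_{n,\rho}^{(\alpha)}(f;x)-f(x)-f'(x)\,\tau_{n,\rho,1}^{(\alpha)}(x)\big|\le\omega(f',\delta)\Big(\mathcal{G}_{n,\rho}^{(\alpha)}(|t-x|;x)+\dfrac{1}{\delta}\,\tau_{n,\rho,2}^{(\alpha)}(x)\Big),
\end{eqnarray*}
and after a triangle inequality the first-order term contributes $\big|\tau_{n,\rho,1}^{(\alpha)}(x)\big|\,|f'(x)|=\big|\frac{1-2x}{n\rho+2}\big|\,|f'(x)|$ by Lemma \ref{l2}(i). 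For the remaining terms I would invoke Cauchy--Schwarz together with $\mathcal{G}_{n,\rho}^{(\alpha)}(e_0;x)=1$ to get $\mathcal{G}_{n,\rho}^{(\alpha)}(|t-x|;x)\le\big(\tau_{n,\rho,2}^{(\alpha)}(x)\big)^{1/2}$, and recall $\mathcal{G}_{n,\rho}^{(\alpha)}((t-x)^2;x)=\tau_{n,\rho,2}^{(\alpha)}(x)$ from Lemma \ref{l2}(ii).

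Finally, choosing $\delta=\sqrt{\tau_{n,\rho,2}^{(\alpha)}(x)}$, which is strictly positive for every $x\in J$ by Lemma \ref{l2}(ii), collapses the parenthesis to $2\sqrt{\tau_{n,\rho,2}^{(\alpha)}(x)}$ and yields (\ref{p1}) precisely. I expect no genuinely hard step here; the only points that need care are the bookkeeping of the $\delta^{-1}$ factor in the $\omega(f',\cdot)$-estimate, so that the constant comes out exactly $2$, and the fact that the Cauchy--Schwarz bound $\mathcal{G}_{n,\rho}^{(\alpha)}(|t-x|;x)\le\big(\mathcal{G}_{n,\rho}^{(\alpha)}((t-x)^2;x)\big)^{1/2}$ relies on $\mathcal{G}_{n,\rho}^{(\alpha)}$ preserving constants, which is exactly the defining feature of these operators.
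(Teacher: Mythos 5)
Your proposal is correct and follows essentially the same route as the paper: first-order Taylor decomposition at $x$, the bound $\big|f(t)-f(x)-(t-x)f'(x)\big|\le\big(|t-x|+\delta^{-1}(t-x)^2\big)\omega(f',\delta)$, Cauchy--Schwarz for $\mathcal{G}_{n,\rho}^{(\alpha)}(|t-x|;x)$, and the choice $\delta=\sqrt{\tau_{n,\rho,2}^{(\alpha)}(x)}$. The only (immaterial) difference is that you derive the remainder estimate via the mean value theorem, whereas the paper writes the remainder as $\int_x^t(f'(u)-f'(x))\,du$ and integrates the modulus bound; both yield the identical intermediate inequality.
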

\begin{proof}
Let $f\in C^1(J)$. For any $t,x\in J,$ we have
$$f(t)-f(x)=f'(x)(t-x)+\int_{x}^t\left(f'(u)-f'(x)\right)du.$$
Using $\mathcal{G}_{n,\rho}^{(\alpha)}(\cdot;x)$ on both sides of the above relation, we may write
$$\mathcal{G}_{n,\rho}^{(\alpha)}(f(t)-f(x);q_n,x)=f'(x)\mathcal{G}_{n,\rho}^{(\alpha)}(t-x;x)+
\mathcal{G}_{n,\rho}^{(\alpha)}\left(\int_{x}^t\left(f'(u)-f'(x)\right)du;x\right)$$
Using the well-known inequality of modulus of continuity $|f(t)-f(x)|\leq\omega(f,\delta)\left(\frac{|t-x|}{\delta}+1\right),\delta>0,$ we obtain
$$\left|\int_{x}^t\left(f'(u)-f'(x)\right)du\right|\leq\omega(f',\delta)\left(\frac{(t-x)^2}{\delta}+|t-x|\right),$$
it follows that
$$\left|\mathcal{G}_{n,\rho}^{(\alpha)}(f;x)-f(x)\right|\leq|f'(x)|~~~|\mathcal{G}_{n,\rho}^{(\alpha)}(t-x;x)|+
\omega(f',\delta)\left\{\frac{1}{\delta}\mathcal{G}_{n,\rho}^{(\alpha)}((t-x)^2;x)+\mathcal{G}_{n,\rho}^{(\alpha)}(|t-x|;x)\right\}.$$
From Cauchy-Schwarz inequality, we have
\begin{eqnarray*}\left|\mathcal{G}_{n,\rho}^{(\alpha)}(f;x)-f(x)\right|&\leq&|f'(x)|~~~|\mathcal{G}_{n,\rho}^{(\alpha)}(t-x;x)|\\&&+
\omega(f',\delta)\left\{\dfrac{1}{\delta}\sqrt{\mathcal{G}_{n,\rho}^{(\alpha)}((t-x)^2;x)}+1\right\}
\sqrt{\mathcal{G}_{n,\rho}^{(\alpha)}((t-x)^2;x)}.
\end{eqnarray*}
Now, choosing $\delta=\sqrt{\tau_{n,\rho,2}^{(\alpha)}(x)},$ the required result follows.
\end{proof}

\subsection{Rate of convergence}
Let $DBV_{(J)}$ be the class of all absolutely continuous functions $f$ defined on $J$, having on $J$ a derivative $f^{\prime}$ equivalent with a function of bounded variation on $J$. We observed that the functions $f$ $\in DBV_{(J)}$ possess a representation
\begin{eqnarray*}
f(x)=\int_{0}^xg(t)dt+f(0)
\end{eqnarray*}
where $g\in BV_{(J)}$, i.e., $g$ is a function of bounded variation on $J$.

The operators $\mathcal{G}_{n,\rho}^{(\alpha)}(f;x)$ also admit the integral representation
\begin{eqnarray}\label{q4}
\mathcal{G}_{n,\rho}^{(\alpha)}(f;x)=\int_{0}^{1}\mathcal{U}_{n,\rho}^{(\alpha)}(x,t) f(t) dt,
\end{eqnarray}
where the kernel $\mathcal{U}_{n,\rho}^{(\alpha)}(x,t)$ is given by
$$\mathcal{U}_{n,\rho}^{(\alpha)}(x,t)=\sum_{k=0}^{n}p_{n,k}^{(\alpha)}(x)\mu_{n,\rho}(t).$$

\begin{lemma}\label{lma78}
For a fixed $x\in(0,1)$ and sufficiently large $n$, we have
\begin{enumerate}[(i)]
\item $\gamma_{n,\rho}^{(\alpha)}(x,y)=\displaystyle\int_{0}^{y}\mathcal{U}_{n,\rho}^{(\alpha)}(x,t)dt\leq\dfrac{\mathcal{X}_{\rho}^{(\alpha)}}{(1+n\rho)}\frac{x(1-x)}{(x-y)^{2}}\,,\,\,0\leq y<x,$
\item $1-\gamma_{n,\rho}^{(\alpha)}(x,z)=\displaystyle\int_{z}^{1}\mathcal{U}_{n,\rho}^{(\alpha)}(x,t)dt\leq $ $\dfrac{\mathcal{X}_{\rho}^{(\alpha)}}{(1+n\rho)}\dfrac{x(1-x)}{(z-x)^{2}},$ $x<z<1,$
\end{enumerate}
where $\mathcal{X}_{\rho}^{(\alpha)}$ is defined in Lemma \ref{l3}.
\end{lemma}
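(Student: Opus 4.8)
The plan is to estimate the tail probabilities $\gamma_{n,\rho}^{(\alpha)}(x,y)$ and $1-\gamma_{n,\rho}^{(\alpha)}(x,z)$ by a standard Chebyshev-type trick, converting an integral of the kernel over a region away from $x$ into a second-moment bound. First I would observe that, by the definition of the kernel and the integral representation (\ref{q4}), $\gamma_{n,\rho}^{(\alpha)}(x,y)=\mathcal{G}_{n,\rho}^{(\alpha)}(\chi_{[0,y]};x)$, where $\chi_{[0,y]}$ is the characteristic function of $[0,y]$.

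For part (i), fix $0\le y<x$. On the interval of integration $t\in[0,y]$ we have $x-t\ge x-y>0$, hence $\dfrac{(x-t)^2}{(x-y)^2}\ge 1$. Therefore
\begin{eqnarray*}
\gamma_{n,\rho}^{(\alpha)}(x,y)=\int_{0}^{y}\mathcal{U}_{n,\rho}^{(\alpha)}(x,t)\,dt
\le \int_{0}^{y}\frac{(x-t)^2}{(x-y)^2}\,\mathcal{U}_{n,\rho}^{(\alpha)}(x,t)\,dt
\le \frac{1}{(x-y)^2}\int_{0}^{1}(t-x)^2\,\mathcal{U}_{n,\rho}^{(\alpha)}(x,t)\,dt,
\end{eqnarray*}
where in the last step we enlarged the domain of integration to all of $[0,1]$ using the non-negativity of the kernel. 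The remaining integral is exactly $\mathcal{G}_{n,\rho}^{(\alpha)}((t-x)^2;x)$, which by Lemma \ref{l3} is bounded above by $\dfrac{\mathcal{X}_{\rho}^{(\alpha)}\,x(1-x)}{(1+n\rho)}$. Substituting gives the claimed bound.

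Part (ii) is entirely symmetric: for $x<z<1$ and $t\in[z,1]$ we have $t-x\ge z-x>0$, so $\dfrac{(t-x)^2}{(z-x)^2}\ge 1$, and the same chain of inequalities yields
\begin{eqnarray*}
1-\gamma_{n,\rho}^{(\alpha)}(x,z)=\int_{z}^{1}\mathcal{U}_{n,\rho}^{(\alpha)}(x,t)\,dt
\le \frac{1}{(z-x)^2}\int_{0}^{1}(t-x)^2\,\mathcal{U}_{n,\rho}^{(\alpha)}(x,t)\,dt
\le \frac{\mathcal{X}_{\rho}^{(\alpha)}}{(1+n\rho)}\frac{x(1-x)}{(z-x)^2}.
\end{eqnarray*}
There is no real obstacle here; the only point requiring a word of care is the legitimacy of enlarging the region of integration, which is immediate since $\mathcal{U}_{n,\rho}^{(\alpha)}(x,t)\ge 0$ for all $t\in[0,1]$. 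The phrase ``sufficiently large $n$'' in the statement is not actually needed for the estimate itself — Lemma \ref{l3} holds for all $n\in\mathbb{N}$ — but it will be convenient later when these bounds are applied in the proof of the main rate-of-convergence theorem.
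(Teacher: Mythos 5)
Your proof is correct and follows essentially the same route as the paper: the Chebyshev-type bound $\chi_{[0,y]}(t)\leq (x-t)^2/(x-y)^2$ on the range of integration, followed by enlarging the domain to $[0,1]$ and invoking Lemma \ref{l3}. Your version is in fact slightly more careful than the paper's, which writes an equality where the enlargement of the integration domain only gives an inequality, and your remark that ``sufficiently large $n$'' is not needed for the estimate itself is accurate.
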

\begin{proof}
$(i)$ From Lemma \ref{l3}, we get
\begin{eqnarray*}
\gamma_{n,\rho}^{(\alpha)}(x,y) &=&\int\limits_{0}^{y}\mathcal{U}_{n,\rho}^{(\alpha)}(x,t)dt\leq\int_{0}^{y}\bigg(\frac{x-t}{x-y}\bigg)^{2}\mathcal{U}_{n,\rho}^{(\alpha)}(x,t)dt \\
&=&\mathcal{G}_{n,\rho}^{(\alpha)}((t-x)^{2};x)( x-y) ^{-2}\leq \dfrac{\mathcal{X}_{\rho}^{(\alpha)}}{(1+n\rho)}\dfrac{x(1-x)}{(x-y)^{2}}.
\end{eqnarray*}
The proof of $(ii)$ is similar hence the details are missing.
\end{proof}

\begin{thm}
Let $f$ $\in DBV(J).$ Then for every $x\in(0,1)$ and sufficiently large $n$, we have
\begin{eqnarray*}
| \mathcal{G}_{n,\rho}^{(\alpha)}(f;x)-f(x)|  &\leq &\frac{(1-2x)}{(n\rho+2)}\frac{| f^{\prime }(x+)+f^{\prime }(x-)|}{2}+\sqrt{\frac{\mathcal{X}_{\rho}^{(\alpha)} x(1-x)}{(1+n\rho)}}\frac{|f^{\prime }(x+)-f^{\prime}(x-)|}{2}
\\&&+\frac{\mathcal{X}_{\rho}^{(\alpha)} (1-x)}{(1+n\rho)}\sum_{k=1}^{[\sqrt{n}] }\bigvee_{x-(x/k)}^{x}(f^{\prime}_{x}) +
\frac{x}{\sqrt{n}}\bigvee_{x-(x/\sqrt{n})}^{x}(f^{\prime}_x)
\\&&+\frac{\mathcal{X}_{\rho}^{(\alpha)} x}{(1+n\rho)}\sum_{k=1}^{[\sqrt{n}] }\bigvee_{x}^{x+((1-x)/k)}(f^{\prime}_x) +\frac{(1-x)}{\sqrt{n}}\bigvee_{x}^{x+((1-x)/\sqrt{n})}(f^{\prime}_x),
\end{eqnarray*}
where $\bigvee_{c}^{d}(f^{\prime}_{x}) $ denotes the total variation of $f^{\prime}_{x} $ on $[c,d] $ and $f^{\prime}_{x}$ is defined by
\begin{eqnarray}\label{q5}
f_{x}^{\prime}(t)=\left\{
\begin{array}{cc}
f^{\prime}(t)-f^{\prime}(x-), & 0\leq t<x \\
0, & t=x \\
f^{\prime}(t)-f^{\prime}(x+) & x<t<1.
\end{array}
\right.
\end{eqnarray}
\end{thm}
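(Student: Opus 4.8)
The plan is to follow the classical Bojanic--Vuilleumier/Cheng decomposition for operators acting on $DBV(J)$. First I would use the integral representation \eqref{q4} together with the identity $\mathcal{G}_{n,\rho}^{(\alpha)}(e_0;x)=1$ to write
\begin{eqnarray*}
\mathcal{G}_{n,\rho}^{(\alpha)}(f;x)-f(x)=\int_0^1\mathcal{U}_{n,\rho}^{(\alpha)}(x,t)\,(f(t)-f(x))\,dt,
\end{eqnarray*}
and then express $f(t)-f(x)=\int_x^t f'(u)\,du$. Splitting $f'(u)$ according to \eqref{q5}, one gets $f'(u)=f'_x(u)+\tfrac12\big(f'(x+)+f'(x-)\big)+\tfrac12\big(f'(x+)-f'(x-)\big)\,\mathrm{sgn}(u-x)$. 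Integrating the last two (elementary) terms against the kernel produces, via Lemma \ref{l2}(i) and the Cauchy--Schwarz estimate combined with Lemma \ref{l3}, exactly the first two terms on the right-hand side of the claimed bound. Hence the whole problem reduces to estimating
\begin{eqnarray*}
\mathcal{A}_{n}(x):=\int_0^1\mathcal{U}_{n,\rho}^{(\alpha)}(x,t)\left(\int_x^t f'_x(u)\,du\right)dt.
\end{eqnarray*}

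Next I would break $\mathcal{A}_n(x)$ at $t=x$ into $\mathcal{A}_n^-$ (over $[0,x]$) and $\mathcal{A}_n^+$ (over $[x,1]$), and treat, say, $\mathcal{A}_n^-$; the argument for $\mathcal{A}_n^+$ is symmetric. On $[0,x]$ I would further split the $t$-integral at $x-x/\sqrt n$: on $(x-x/\sqrt n,\,x]$ one bounds $|f'_x(u)|\le \bigvee_{u}^{x}(f'_x)\le\bigvee_{x-x/\sqrt n}^{x}(f'_x)$ and integrates trivially, using $\int_0^x\mathcal{U}_{n,\rho}^{(\alpha)}(x,t)\,dt\le 1$ and $x-t\le x/\sqrt n$, which yields the term $\tfrac{x}{\sqrt n}\bigvee_{x-x/\sqrt n}^{x}(f'_x)$. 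On $[0,\,x-x/\sqrt n]$ one uses integration by parts on the inner integral, invokes the bound $\gamma_{n,\rho}^{(\alpha)}(x,t)\le \dfrac{\mathcal{X}_{\rho}^{(\alpha)}}{1+n\rho}\dfrac{x(1-x)}{(x-t)^2}$ from Lemma \ref{lma78}(i), and then passes to the standard substitution $t=x-x/u$ to convert $\int_0^{x-x/\sqrt n}(x-t)^{-2}\bigvee_t^x(f'_x)\,dt$ into a finite sum $\sum_{k=1}^{[\sqrt n]}\bigvee_{x-x/k}^{x}(f'_x)$ up to the constant $\dfrac{\mathcal{X}_{\rho}^{(\alpha)}(1-x)}{1+n\rho}$. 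Adding the two contributions gives precisely the two ``$-$''-side variation terms in the statement, and the mirror computation on $[x,1]$, using Lemma \ref{lma78}(ii), gives the two ``$+$''-side terms.

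The main obstacle, as usual in this type of theorem, is the bookkeeping in the integration-by-parts step and the change-of-variable that turns the integral of the total variation against $(x-t)^{-2}$ into the partial sum over $k$; one must be careful that the boundary terms from integration by parts either vanish (because $f'_x(x)=0$, so $\int_x^t f'_x\to 0$ as $t\to x$) or are absorbed into the already-extracted pieces, and that the factor $x(1-x)$ in Lemma \ref{lma78} splits correctly into the $(1-x)$ attached to the ``$-$'' side and the $x$ attached to the ``$+$'' side. No genuinely new estimate is needed beyond Lemmas \ref{l2}, \ref{l3} and \ref{lma78} — the proof is a careful assembly of these with the Bojanic--Cheng technique, so I would present it in that order: (1) the decomposition and disposal of the jump terms, (2) reduction to $\mathcal{A}_n^{\pm}$, (3) the near-diagonal estimate on $(x-x/\sqrt n,x]$, (4) the far estimate via Lemma \ref{lma78} and the summation trick, and (5) the symmetric treatment on $[x,1]$, then collect everything.
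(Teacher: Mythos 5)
Your proposal follows essentially the same route as the paper: the Bojanic--Cheng decomposition of $f'$ into $f'_x$ plus the mean and signed jump terms (disposed of via Lemma \ref{l2}(i) and Cauchy--Schwarz with Lemma \ref{l3}), followed by splitting the remainder at $t=x$ and again at $x\mp x/\sqrt{n}$, $x\pm(1-x)/\sqrt{n}$, with integration by parts, Lemma \ref{lma78}, and the substitution $u=x/(x-t)$ (resp.\ $v=(1-x)/(t-x)$) to produce the partial sums of total variations. The only cosmetic differences are that you bound the near-diagonal piece directly rather than after integration by parts, and you omit the (measure-zero, hence harmless) $\delta_x(u)$ correction that the paper carries in its decomposition of $f'(u)$; neither affects correctness.
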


\begin{proof}
Since $\mathcal{G}_{n,\rho}^{(\alpha)}(1;x)=1,$ by using (\ref{q4}), for every $x\in(0,1)$ we may write
\begin{eqnarray}\label{q6}
\mathcal{G}_{n,\rho}^{(\alpha)}( f;x) -f(x)&=&\int_{0}^{1}\mathcal{U}_{n,\rho}^{(\alpha)}(x,t)(f(t)-f(x)) dt  \nonumber \\
&=&\int_{0}^{1}\mathcal{U}_{n,\rho}^{(\alpha)}(x,t)\left(\int_{x}^{t}f^{\prime}(u) du\right) dt.
\end{eqnarray}
For any $f\in DBV(J),$ by (\ref{q5}) we can write
\begin{eqnarray}\label{q7}
f^{\prime}(u) &=&f'_{x}(u)+\frac{1}{2}(
f^{\prime }(x+)+f^{\prime }(x-)) +\frac{1}{2}( f^{\prime}(x+)-f^{\prime }(x-)) sgn(u-x)  \nonumber \\
&&+\delta _{x}(u)[ f^{\prime }(u)-\frac{1}{2}( f^{\prime}(x+)+f^{\prime }(x-))],
\end{eqnarray}
where
\[
\delta _{x}(u)=\left\{
\begin{array}{c}
1\,\,,\,\,u=x \\
0\,\,,\,\,u\neq x.
\end{array}
\right.
\]
Obviously,
\begin{eqnarray*}
\int_{0}^{1}\bigg( \int_{x}^{t}\bigg(f^{\prime }(u)-\frac{1}{2}(f^{\prime }(x+)+f^{\prime }(x-))\bigg)\delta_{x}(u)du\bigg) \mathcal{U}_{n,\rho}^{(\alpha)}(x,t)dt=0.
\end{eqnarray*}
By (\ref{q4}) and simple calculations we find
\begin{eqnarray*}
\int_{0}^{1}\bigg( \int_{x}^{t}\frac{1}{2}( f^{\prime}(x+)+f^{\prime }(x-)) du\bigg) \mathcal{U}_{n,\rho}^{(\alpha)}(x,t)dt
&=&\frac{1}{2}(f^{\prime }(x+)+f^{\prime }(x-))\int_{0}^{1}( t-x) \mathcal{U}_{n,\rho}^{(\alpha)}(x,t)dt \\
&=&\frac{1}{2}( f^{\prime }(x+)+f^{\prime }(x-)) \mathcal{G}_{n,\rho}^{(\alpha)}( (t-x);x)
\end{eqnarray*}
and\\
\noindent
$\displaystyle\left|\int_{0}^{1}\mathcal{U}_{n,\rho}^{(\alpha)}(x,t)\bigg( \int_{x}^{t}\frac{1}{2}(f^{\prime }(x+)-f^{\prime }(x-)) sgn(u-x)du\bigg) dt\right|$
\begin{eqnarray*}
&\leq &\frac{1}{2}\mid f^{\prime }(x+)-f^{\prime }(x-)\mid\int_{0}^{1}|t-x|\mathcal{U}_{n,\rho}^{(\alpha)}(x,t)dt \\
&\leq &\frac{1}{2}\mid f^{\prime }(x+)-f^{\prime }(x-)\mid \mathcal{G}_{n,\rho}^{(\alpha)}(|t-x|;x)  \\
&\leq &\frac{1}{2}\mid f^{\prime }(x+)-f^{\prime }(x-)\mid\bigg(\mathcal{G}_{n,\rho}^{(\alpha)}((t-x) ^{2};x)\bigg) ^{1/2}.
\end{eqnarray*}
 By Lemma \ref{l2} and \ref{l3}, using (\ref{q6})-(\ref{q7}) we find
\begin{eqnarray}\label{q8}
| \mathcal{G}_{n,\rho}^{(\alpha)}(f;x)-f(x)|  &\leq &\frac{1}{2}|f^{\prime }(x+)-f^{\prime }(x-)| \sqrt{\frac{\mathcal{X}_{\rho}^{(\alpha)} x(1-x)}{(1+n\rho)}}\nonumber  \\
&&+\bigg|\int_{0}^{x}\left(\int_{x}^{t}f^{\prime}_{x}(u)du\right)\mathcal{U}_{n,\rho}^{(\alpha)}(x,t)dt\nonumber  \\&&+\int_{x}^{1}\left(\int_{x}^{t}f^{\prime}_{x}(u)du\right) \mathcal{U}_{n,\rho}^{(\alpha)}(x,t)dt\bigg|.
\end{eqnarray}
Let
\begin{eqnarray*}
\mathcal{S}_{n,\rho}^{(\alpha)}(f'_x,x)=\int_{0}^{x}\left(\int_{x}^{t}f'_{x}(u)du\right) \mathcal{U}_{n,\rho}^{(\alpha)}(x,t)dt,\\
\mathcal{T}_{n,\rho}^{(\alpha)}(f'_x,x)=\int_{x}^{1}\left(\int_{x}^{t}f'_{x}(u)du\right) \mathcal{U}_{n,\rho}^{(\alpha)}(x,t)dt.
\end{eqnarray*}
To complete the proof, it is sufficient to determine the terms $\mathcal{S}_{n,\rho}^{(\alpha)}(f'_x,x)$ and $\mathcal{T}_{n,\rho}^{(\alpha)}(f'_x,x).$
Since $\int_{c}^{d}d_{t}\gamma_{n,\rho}^{(\alpha)}(x,t)\leq 1$ for all $[c,d]\subseteq J,$ applying the integration by parts and applying Lemma \ref{lma78} with $y=x-(x/\sqrt{n}),$ we have
\begin{eqnarray*}
|\mathcal{S}_{n,\rho}^{(\alpha)}(f'_x,x)| &=&\bigg|\int_{0}^{x}\left(\int_{x}^{t}f'_{x}(u)du\right) d_{t}\gamma_{n,\rho}^{(\alpha)}(x,t)\bigg|\\
&=&\bigg|\int_{0}^{x}\gamma_{n,\rho}^{(\alpha)}(x,t)f'_{x}(t)dt\bigg|\\
&\leq&\bigg(\int_{0}^{y}+\int_{y}^{x}\bigg) |f'_{x}(t)|\,\,|\gamma_{n,\rho}^{(\alpha)}(x,t)| dt\\
&\leq&\frac{\mathcal{X}_{\rho}^{(\alpha)} x(1-x)}{(1+n\rho)}\int_{0}^{y}\bigvee_{t}^{x}(f'_{x})(x-t)^{-2}dt+\int_{y}^{x}\bigvee_{t}^{x}(f'_{x}) dt\\
&\leq&\frac{\mathcal{X}_{\rho}^{(\alpha)} x(1-x)}{(1+n\rho)}\int_{0}^{x-(x/\sqrt{n})}\bigvee_{t}^{x}(f^{\prime}_{x})( x-t)^{-2}dt+\frac{x}{\sqrt{n}}\bigvee_{x-(x/\sqrt{n})}^{x}(f'_{x}).
\end{eqnarray*}
By the substitution of $u=x/(x-t),$ we have
\begin{eqnarray*}
\frac{\mathcal{X}_{\rho}^{(\alpha)} x(1-x)}{(1+n\rho)}\int_{0}^{x-(x/\sqrt{n})}(x-t)^{-2}\bigvee_{t}^{x}(f^{\prime}_{x}) dt &=&\frac{\mathcal{X}_{\rho}^{(\alpha)} (1-x)}{(1+n\rho)}\int_{1}^{\sqrt{n}}\bigvee_{x-(x/u)}^{x}(f^{\prime}_{x}) du \\
&\leq&\frac{\mathcal{X}_{\rho}^{(\alpha)} (1-x)}{(1+n\rho)}\sum_{k=1}^{[\sqrt{n}]}\int_{k}^{k+1}\bigvee_{x-(x/u)}^{x}(f^{\prime}_{x}) du \\
&\leq&\frac{\mathcal{X}_{\rho}^{(\alpha)} (1-x)}{(1+n\rho)}\sum_{k=1}^{[\sqrt{n}] }\bigvee_{x-(x/k)}^{x}(f'_{x}) .
\end{eqnarray*}
Thus,
\begin{eqnarray}\label{q9}
|\mathcal{S}_{n,\rho}^{(\alpha)}(f'_x,x)| \leq \frac{\mathcal{X}_{\rho}^{(\alpha)} (1-x)}{(1+n\rho)}\sum_{k=1}^{[ \sqrt{n}] }\bigvee_{x-(x/k)}^{x}(f^{\prime}_{x}) +\frac{x}{\sqrt{n}}
\bigvee_{x-(x/\sqrt{n})}^{x}(f^{\prime}_{x}).
\end{eqnarray}
Using the integration by parts and Lemma \ref{lma78} with $z=x+((1-x)/\sqrt{n}),$ we can write
\begin{eqnarray*}
|\mathcal{T}_{n,\rho}^{(\alpha)}(f^{\prime}_x,x)| &=&\bigg|\int_{x}^{1}\left(\int_{x}^{t}f'_{x}(u)du\right) \mathcal{U}_{n,\rho}^{(\alpha)}(x,t)dt\bigg|\nonumber\\
&=&\bigg|\int_{x}^{z}\left(\int_{x}^{t}f'_{x}(u)du\right) d_{t}(1-\gamma_{n,\rho}^{(\alpha)}(x,t))+\int_{z}^{1}\left(\int_{x}^{t}f'_{x}(u)du\right)d_{t}(1-\gamma_{n,\rho}^{(\alpha)}(x,t))\bigg|\nonumber\\
&=&\bigg|\bigg[\int_{x}^tf'_{x}(u)(1-\gamma_{n,\rho}^{(\alpha)}(x,t))du\bigg]_{x}^z-\int_{x}^z f'_{x}(t)(1-\gamma_{n,\rho}^{(\alpha)}(x,t))dt\nonumber\\
&&+\int_{z}^{1}\left(\int_{x}^{t}f'_{x}(u)du\right)d_{t}(1-\gamma_{n,\rho}^{(\alpha)}(x,t))\bigg|\nonumber\\
&=&\bigg|\int_{x}^zf'_{x}(u)du(1-\gamma_{n,\rho}^{(\alpha)}(x,z))-\int_{x}^z f'_{x}(t)(1-\gamma_{n,\rho}^{(\alpha)}(x,t))dt+\bigg[\int_{x}^t f'_{x}(u)du(1-\gamma_{n,\rho}^{(\alpha)}(x,t))\bigg]_{z}^1\nonumber\\&&-\int_{z}^1f'_{x}(t)(1-\gamma_{n,\rho}^{(\alpha)}(x,t))dt\bigg|\nonumber\\
&=&\bigg| \int_{x}^z f'_{x}(t)(1-\gamma_{n,\rho}^{(\alpha)}(x,t))dt+\int_{z}^1f'_{x}(t)(1-\gamma_{n,\rho}^{(\alpha)}(x,t))dt\bigg|\nonumber\\
&\leq&\frac{\mathcal{X}_{\rho}^{(\alpha)} x(1-x)}{(1+n\rho)}\int_{z}^1 \bigvee_{x}^t(f'_{x})(t-x)^{-2}dt+\int_{x}^z\bigvee_{x}^t(f'_{x})dt\nonumber\\
&=&\frac{\mathcal{X}_{\rho}^{(\alpha)} x(1-x)}{(1+n\rho)}\int_{x+((1-x)/\sqrt{n})}^1\bigvee_{x}^t(f'_{x})(t-x)^{-2}dt+\frac{(1-x)}{\sqrt{n}}\bigvee_{x}^{x+(( 1-x) /\sqrt{n})}(f'_{x}).\nonumber
\end{eqnarray*}
By the substitution of $v=(1-x)/(t-x),$ we have
\begin{eqnarray}\label{q10}
|\mathcal{T}_{n,\rho}^{(\alpha)}(f^{\prime}_x,x)|&\leq&\frac{\mathcal{X}_{\rho}^{(\alpha)} x(1-x)}{(1+n\rho)}\int_{1}^{\sqrt{n}}\bigvee_{x}^{x+((1-x)/v)}(f'_{x})(1-x)^{-1}dv+\frac{(1-x)}{\sqrt{n}}\bigvee_{x}^{x+(( 1-x) /\sqrt{n})}(f'_{x})\nonumber\\
&\leq&\frac{\mathcal{X}_{\rho}^{(\alpha)} x}{(1+n\rho)}\sum_{k=1}^{[\sqrt{n}]}\int_{k}^{k+1}\bigvee_{x}^{x+((1-x)/v)}(f'_{x})dv+\frac{(1-x)}{\sqrt{n}}\bigvee_{x}^{x+(( 1-x) /\sqrt{n})}(f'_{x})\nonumber\\
&= &\frac{\mathcal{X}_{\rho}^{(\alpha)} x}{(1+n\rho)}\sum_{k=1}^{[ \sqrt{n}] }\bigvee_{x}^{x+((1-x)/k)}(f'_{x}) +
\frac{(1-x)}{\sqrt{n}}\bigvee_{x}^{x+((1-x))/\sqrt{n}}(f'_{x}).
\end{eqnarray}
Combining (\ref{q8})-(\ref{q10}), we get the desired relation.
\end{proof}

\section{Numerical Examples.}
\begin{example}
In Figure 1, for $n=20, \alpha=0.3, \rho=4,$ the comparison of
convergence of $\mathcal{G}_{20,4}^{(0.3)}(f;x)$ (blue) and the Bernstein-Durrmeyer $D_n(f;x)$ \cite{BD} (red) operators to $f(x)= x^2\sin\left(2x/\pi\right)$(yellow) is illustrated. It is observed that the $\mathcal{G}_{20,4}^{(0.3)}(f;x)$ operators gives a better approximation to $f(x)$ than  Bernstein-Durrmeyer $D_n(f;x)$ for $n=20, \alpha=0.3, \rho=4.$
\end{example}

\begin{center}
$\begin{array}{cc}
  \includegraphics[width=.6\columnwidth]{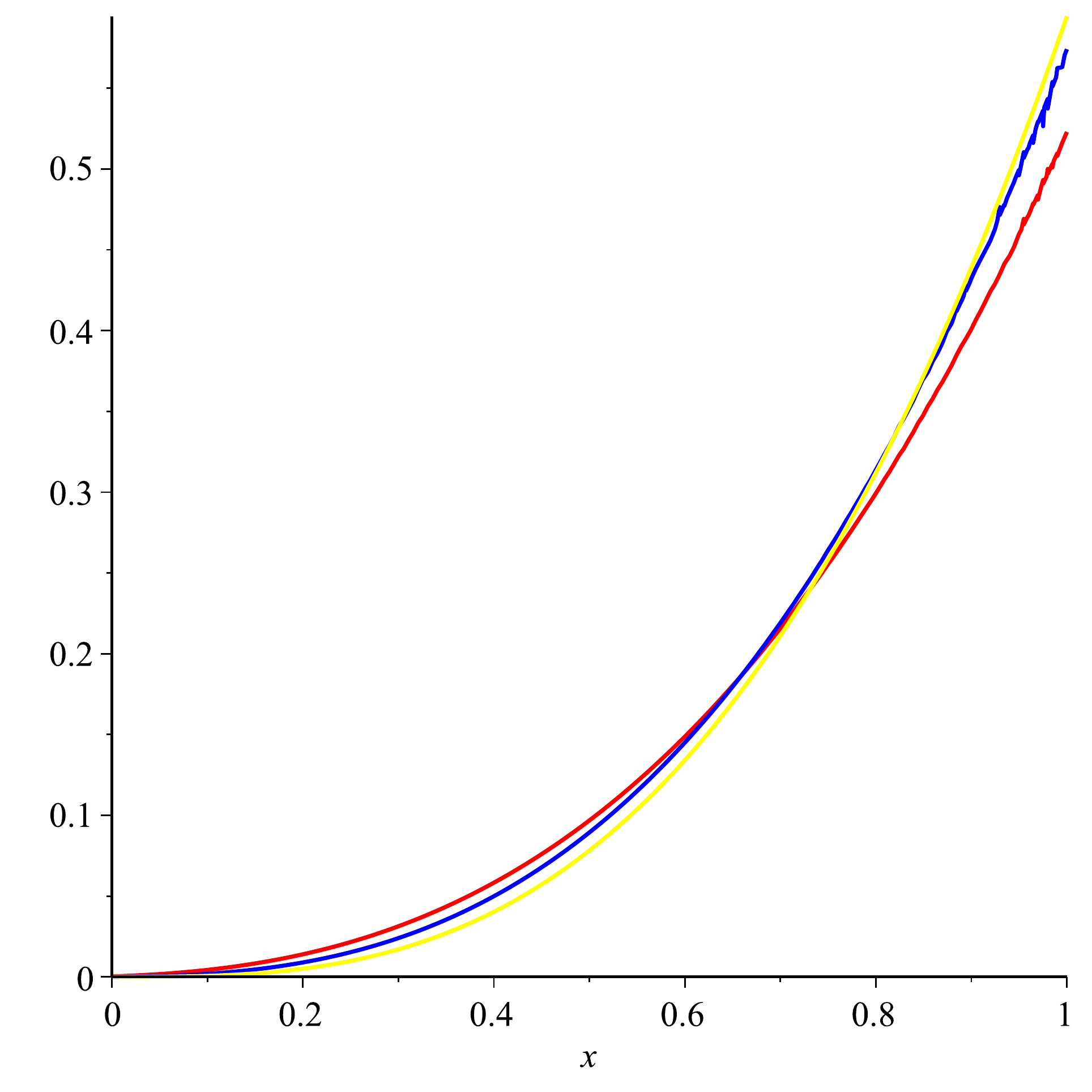} \\
     Figure ~~1. \mbox{The convergence of $\mathcal{G}_{20,4}^{(0.3)}(f;x)$ and $D_{20}(f;x)$ to $f(x)$}
     \end{array}$
  \end{center}

  \begin{example}
For $n\in\{10,20,50\}$, $\alpha=0.2$ and $\rho=4,$ the convergence of the operators $\mathcal{G}_{10,4}^{(0.2)}(f;x)$ (green), $\mathcal{G}_{20,4}^{(0.2)}(f;x)$ (red)  and $\mathcal{G}_{50,4}^{(0.2)}(f;x)$ (blue) to $f(x)=x^7+10x^5+x$ (yellow) is illustrated in Figure 2. We observed that for the values of $n$ increasing, the graph of $\mathcal{G}_{n,\rho}^{(\alpha)}(f;x)$ goes to the graph of
the function $f(x).$
\end{example}

\begin{center}
$\begin{array}{cc}
  \includegraphics[width=.6\columnwidth]{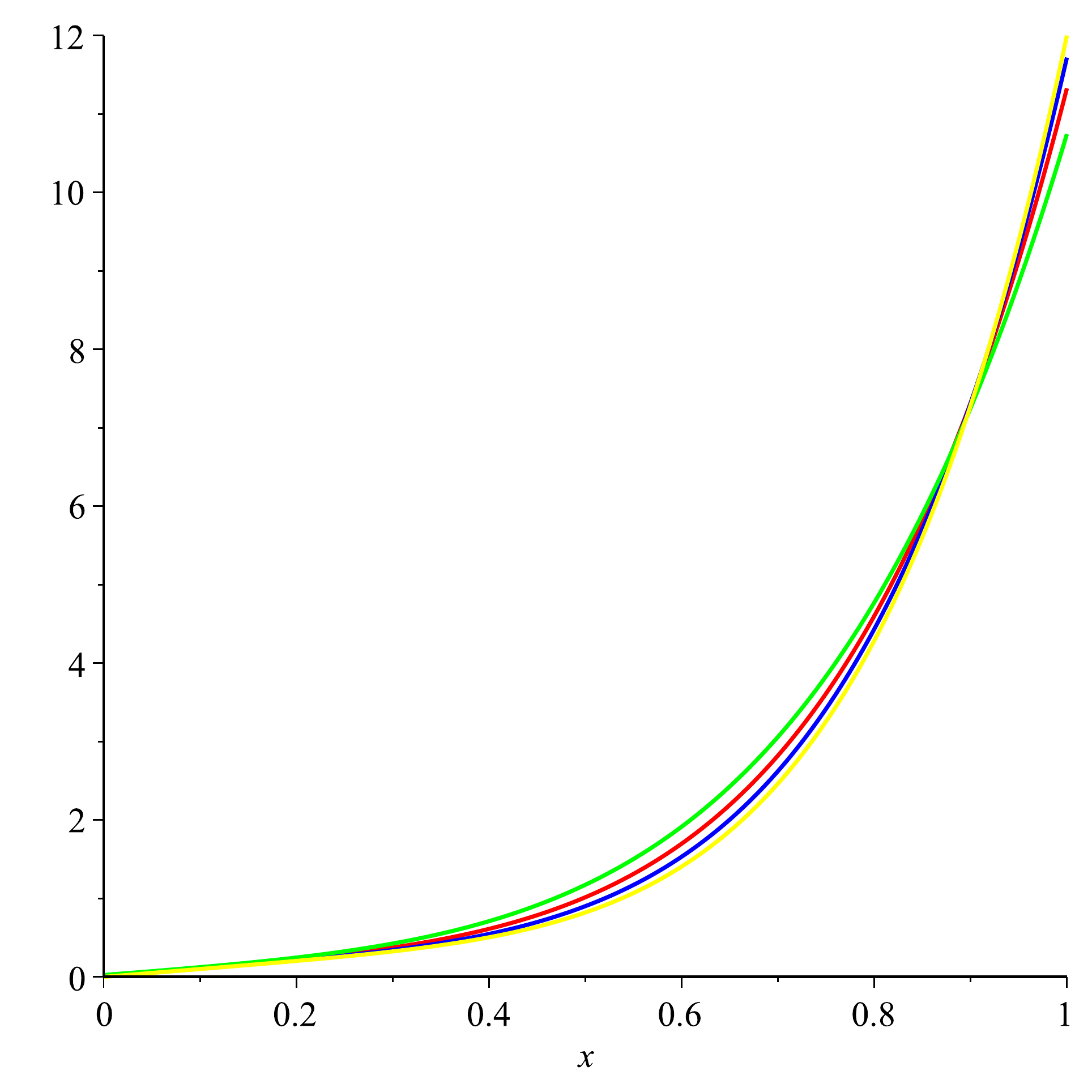} \\
     Figure ~~2. \mbox{The convergence of $\mathcal{G}_{n,\rho}^{(\alpha)}(f;x)$ to $f(x)$ }
   \end{array}$
  \end{center}

\end{document}